\documentclass[letterpaper, reqno,11pt]{article}
\usepackage[margin=1.0in]{geometry}
\usepackage{color,latexsym,amsmath,amssymb, amsthm, graphicx, enumitem}
\usepackage[percent]{overpic}
\usepackage{url}

\newcommand{\RR}{\mathbb{R}}

\newcommand{\ZZ}{\mathbb{Z}}

\newcommand{\eps}{\varepsilon}
\newcommand{\tubes}{\mathbb{T}}

\newtheorem{thm}{Theorem}
\newtheorem{lem}[thm]{Lemma}
\newtheorem{defn}[thm]{Definition}
\newtheorem{prop}[thm]{Proposition}
\newtheorem{cor}[thm]{Corollary}

\newtheorem*{twoEndsAss}{Assertion $\mathbf{TE}(d,a,b)$}
\newtheorem*{robustTransAss}{Assertion $\mathbf{RT}(d,a,b)$}

\newtheorem{rem}[thm]{Remark}

\begin{document}
\pagenumbering{arabic}
\title{New bound on the Hausdorff dimension of Kakeya sets in $\RR^4$}
\author{Mukul Rai Choudhuri}

\maketitle

\begin{abstract}
    We obtain a slight improvement to the trilinear bound of Guth-Zahl by application of Wang-Zahl's resolution of 3-D Kakeya to tubes inside a thickened neighborhood of a variety. This is combined with the planebrush argument of Katz and Zahl to obtain the new bound of 3.073.
\end{abstract}

\section{Introduction}

A Besicovitch set (also called a Kakeya set) in $\RR^n$ is a compact subset of $\RR^n$ containing a unit line segment in every direction. The Kakeya conjecture asserts that every Besicovitch set in $\RR^n$ must have Hausdorff dimension $n$. For $n=2$, the conjecture was proved by Davies in 1971 \cite{Davies}. The 3-dimensional case was proved by Wang and Zahl \cite{WZ1,WZ2,WZ3} in a major recent breakthrough. The conjecture remains open for $n\geq 4$, though partial results exist. See \cite{Z3} for a recent survey of progress on the Kakeya conjecture by Zahl.

In this paper, we make some partial progress towards the Kakeya conjecture in $\RR^4$. Specifically, we will prove the following result.

\begin{thm}\label{mainresult}
Every Besicovitch set in $\RR^4$ has Hausdorff dimension at least $\frac{13}{4}-\frac{12951-54\sqrt{10593}}{41972}=3.073854...$.
\end{thm}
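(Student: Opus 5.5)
We follow the Katz--Zahl framework for $\RR^4$, which combines a \emph{planebrush} argument with a \emph{trilinear} (Guth--Zahl type) estimate. The new ingredient is a sharper trilinear bound, obtained by feeding the Wang--Zahl resolution of the $3$-dimensional Kakeya conjecture into the step where tubes concentrate in a thickened neighborhood of a low-degree variety.

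\textbf{Discretization.} First we pass to the standard discretized setting, arguing by contradiction. Suppose the Kakeya set $K$ has dimension $4-\sigma$ with $\sigma$ larger than the claimed value. Then at scale $\delta$ we obtain a family $\tubes$ of $\delta$-tubes that satisfies the Wolff axioms and points in $\delta$-separated directions. Its union has volume roughly $\delta^{\sigma}$, so a typical point is covered about $\delta^{-(1-\sigma)}$ times.

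\textbf{Multiplicity dichotomy.} After the usual two-ends and broad/narrow reductions, the incidences are dominated by one of two regimes.
\begin{enumerate}[label=(\roman*)]
\item \emph{Plany (narrow) regime.} Through a typical point, the tubes cluster near a $2$-plane. Here we run the Katz--Zahl planebrush argument: a rich plane through many tubes yields a planebrush, and the volume of planebrushes gives a lower bound on $|\bigcup \tubes|$ in terms of the plany parameters.
\item \emph{Broad regime.} Through a typical point, the tubes span three transverse directions. Here we apply a trilinear estimate.
\end{enumerate}

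\textbf{Improved trilinear bound.} In the broad regime we redo the Guth--Zahl polynomial partitioning argument. The cellular case is handled by induction as before. In the algebraic case, tubes lie in the $\delta$-neighborhood of a hypersurface $Z=Z(P)$ of controlled degree. The key new step is to show that tubes tangent to $Z$ behave like a $3$-dimensional Kakeya configuration inside $N_\delta(Z)$. Locally, $Z$ is a graph over a $3$-plane, and after a two-ends/regularity reduction we can cover it by pieces where the tubes form, up to a $\delta^{-\eps}$-loss, a Wolff-axiom family in a $3$-dimensional slab. Wang--Zahl then gives that their union has essentially full $3$-dimensional measure in $N_\delta(Z)$. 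This improves the exponent that Guth--Zahl obtained from weaker $3$-dimensional input.

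We expect this transfer to be the main obstacle. The difficulty is the curvature of $Z$ at intermediate scales: tubes tangent to $Z$ need not lie in a single $3$-plane. Our first attempt will be to decompose $Z$ into patches of diameter $\rho$ with $\delta\le\rho\le 1$, chosen so that $Z$ is $\delta$-flat on each patch. On each patch we rescale so the tubes become a Wolff family of fatter tubes in $\RR^3$, apply Wang--Zahl at each scale, and then combine the scales through a multiscale (Frostman/convex Wolff axioms) argument.

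\textbf{Optimization.} The output is a new inequality relating $\sigma$ and the broadness parameter, which we pair with the Katz--Zahl planebrush inequality. Maximizing the minimum of the two resulting lower bounds over the free parameter yields a quadratic optimization problem. Its root produces the irrational constant $\frac{13}{4}-\frac{12951-54\sqrt{10593}}{41972}$, and this last step should be a routine computation.
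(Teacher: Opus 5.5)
\textbf{Verdict: genuine gap.} Your architecture is the paper's: Wang--Zahl inside a grain gives a better Guth--Zahl trilinear bound, which is fed into the Katz--Zahl planebrush induction. But the step you flag as the main obstacle is left open, and the route you sketch for it is not the one that works.

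\textbf{The grain step.} Curvature of $Z$ is not the issue. The paper splits $Z\cap[-1,1]^4$ into $O_d(1)$ bounded-complexity semialgebraic pieces $S_n$. Each piece is a Lipschitz graph over a coordinate hyperplane $e_j^\perp$, with $|\operatorname{proj}_{N_pS_n}(e_j)|\geq 1/2$, so every line parallel to $e_j$ meets $N_\delta(S_n)$ in length $\lesssim_d\delta$. After pigeonholing the tubes onto one piece, the paper simply projects along $e_j$. Tubes inside $N_\delta(S_n)$ are far from vertical, so their projections are $\delta$-tubes in $\RR^3$ however curved $S_n$ is, and volumes lift back with a factor $\delta$.

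What actually has to be proved is that the projected family obeys the linear Wolff axioms in $\RR^3$. The indispensable input here, which you never invoke, is the polynomial Wolff axioms theorem of Katz--Rogers and Zahl. The tubes projecting into a $1\times t_1\times t_2$ prism $R$ all lie in $\pi_j^{-1}(R)\cap N_\delta(S_n)$. This is a curved semialgebraic set of complexity $O_d(1)$ and volume $\lesssim t_1t_2\delta$, so there are $\lessapprox t_1t_2\delta^{-2}$ such tubes. Direction separation and the linear Wolff axioms in $\RR^4$ do not give this bound for curved sets.

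Your flat-patch scheme does not supply it either. On a patch of diameter $\rho$ (typically $\rho\sim\delta^{1/2}$), the rescaled segments are $(\delta/\rho)$-tubes with up to $\rho^{-3}$ of them per $(\delta/\rho)$-cap of directions. So they are not a Wolff family without further input, and the long-range geometry Wang--Zahl needs has been discarded. The multiscale argument meant to repair this is unspecified.

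\textbf{The quantitative link.} This is also missing, and it is where the constant comes from. Guth--Zahl use a grains decomposition plus Bourgain--Guth in each sub-grain, not a cellular induction. There the grain enters only through the number $N_1$ of length-$\ell_1$ segments in a sub-grain $G'$. Knowing that these tubes have a union of ``full 3-dimensional'' size gains nothing by itself: comparing the Wang--Zahl lower bound $\delta^3N_1\ell_1^4$ with the grain volume $|G'|\lesssim\ell_1^3\delta$ just reproduces the polynomial-Wolff bound $N_1\lessapprox\delta^{-2}\ell_1^{-1}$ that Guth--Zahl already had.

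The paper instead compares with $|A'|/|\mathcal{G}'|$, the portion of the pigeonholed set in one sub-grain. This gives $N_1\lessapprox\delta^{-3}\mu_{i_0}^{-4}|A'|^{-3}(\delta^3|\tubes|)^4$. It then splits $N_1^{1/2}=N_1^{1/3}N_1^{1/6}$ in the Bourgain--Guth bound, which yields the wedge exponent $9/13$ and the factor $\theta^{3/4}$ (instead of $\theta$) in the trilinear volume bound. If you follow this route, note that this comparison tacitly needs the segments in $\tubes_{1,G'}$ to lie essentially inside $A'$, which is not automatic and must be justified.

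\textbf{Where the constant comes from.} It is not a single max--min of two bounds. The planebrush estimate, the \L{}aba--Tao bound, the trivial plany bound and the induction hypothesis (applied to the $\delta$-tubes inside each $\theta$-tube) combine into one $\theta$-dependent bound. Eliminating $\theta$ against the new trilinear bound gives $\alpha''$. Interpolating once more with Wolff's hairbrush gives the recursion $\alpha_{j+1}=\frac{-76\alpha_j^2+24\alpha_j+189}{315-174\alpha_j}$, whose fixed point solves $98\alpha^2-291\alpha+189=0$. The theorem's value is $4-\alpha''$ at that fixed point. None of this can be computed without the new exponent, so calling it routine is premature.

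\textbf{Minor slip.} With $\sim\delta^{-3}$ tubes and a union of volume $\delta^{\sigma}$, the typical multiplicity is $\delta^{-\sigma}$, not $\delta^{-(1-\sigma)}$.
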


Let us briefly discuss the recent history of progress on the Kakeya conjecture in $\RR^4$. In 2018, Guth and Zahl \cite{GZ} (in conjunction with the results of Zahl \cite{Z}, and independently Katz-Rogers \cite{KR}) proved that Kakeya sets in $\RR^4$ have Hausdorff dimension at least 3.025. The primary ingredients used are the polynomial Wolff axioms and grains decomposition, which were used to obtain a new trilinear Kakeya estimate.

Following this, Katz and Zahl \cite{KZ} improved the Hausdorff dimension bound to 3.059 using the planebrush argument. This was a higher dimensional analogue of Wolff's hairbrush argument \cite{W}. This in fact yields a Hausdorff dimension bound of 3, which was the longstanding record before the bound of Guth and Zahl \cite{GZ}. Katz and Zahl \cite{KZ} also used the trilinear bound of Guth-Zahl in their induction. A finite field version of the planebrush argument can be found in the expository paper \cite{LRZ}.

For our result, we will be obtaining a slight improvement to the trilinear bound of Guth-Zahl \cite{GZ} by application of Wang-Zahl's resolution of 3-D Kakeya \cite{WZ3}. This is then plugged in to the induction of Katz and Zahl \cite{KZ} to obtain the new bound of 3.073.

We will be working in the ambient space $\RR^4$ throughout this paper.

\subsection{Acknowledgement}

The author would like to thank Joshua Zahl for many helpful  suggestions during the preparation of this manuscript. Some AI assistance was also used in figuring out various parts of the proof.

\section{Technical preliminaries and tools}

Since we will be obtaining our result by tweaking the induction architecture of Katz and Zahl \cite{KZ}, we borrow their setup. 

\subsection{Tubes, shadings, and refinements}
\begin{defn}
A $\delta$-tube is the $\delta$-neighborhood of a unit line segment in $\RR^4$. Every $\delta$-tube has measure $\sim\delta^3$. We say that two $\delta$-tubes are essentially identical if the 2-fold dilate of one of the tubes contains the other. If two tubes are not essentially identical then we say they are essentially distinct.
\end{defn}

\begin{defn}
A $\delta$-cube is a set of the form $Q=[0,\delta)^4+v$, where $v\in (\delta\ZZ)^4$. Observe that the set of all $\delta$-cubes tile $\RR^4$. The symbol $Q$ will always refer to a $\delta$-cube, so for example the expression $\sum_{Q\subset A}f(Q)$ will refer to a sum taken over all $\delta$-cubes contained in the set $A$.
\end{defn}

\begin{defn}\label{shading}
Let $T$ be a $\delta$-tube. A shading of $T$ is a set $Y(T)$ that is a union of $\delta$-cubes, each of which intersect $T$. Let $\tubes$ be a set of $\delta$-tubes; for each $T\in\tubes$, let $Y(T)$ be a shading of $T$. We refer to the pair $(\tubes,Y)$ as a set of tubes and their associated shading.
\end{defn}

For each $\delta$-cube $Q$, define 
\begin{equation*}
\tubes_Y(Q)=\{T\in\tubes\colon Q\subset Y(T)\}.
\end{equation*}

If $(\tubes,Y)$ is a set of tubes and their associated shading, define $\mathcal{Q}(Y)$ to be the set of $\delta$-cubes that are contained in at least one shading $Y(T)$ for some $T\in\tubes$. 

If $(\tubes,Y)$ is a set of tubes and their associated shading, define 
\begin{equation*}
\lambda_Y = \frac{1}{|\tubes|}\sum_{T\in\tubes}\frac{|Y(T)|}{|T|},
\end{equation*}
and define
\begin{equation*}
\mu_Y = \frac{1}{|\mathcal{Q}(Y)|}\sum_{Q\in\mathcal{Q}(Y)}|\tubes_Y(Q)|.
\end{equation*}
$\lambda_Y$ is the average shading density of a tube from $(\tubes,Y)$, and $\mu_Y$ is the average multiplicity of a cube from $\mathcal{Q}(Y)$ (i.e. the average number of tubes from $(\tubes,Y)$ whose shading contains $Q$).

\begin{defn}[Refinements]
Let $(\tubes,Y)$ be a set of tubes and their associated shading and let $t>0$. We say that a pair $(\tubes^\prime,Y^\prime)$ is a \emph{$t$-refinement} of $(\tubes,Y)$ if $\tubes^\prime\subset\tubes,$ $Y^\prime(T)\subset Y(T)$ for each $T\in\tubes^\prime$, and
\begin{equation*}
\sum_{T\in\tubes^\prime}|Y^\prime(T)|\geq t\sum_{T\in\tubes}|Y(T)|.
\end{equation*}

For example, if $(\tubes,Y)$ is a set of tubes and their associated shading, then there exists a $\sim|\log\delta|^{-1}$-refinement $(\tubes^\prime,Y^\prime)$ so that $\mu_{Y^\prime}\leq |\tubes^\prime_{Y^\prime}(Q)|\leq2\mu_{Y^\prime}$ for all $Q\in\mathcal{Q}(Y^\prime)$. Since $|\log\delta|^{-1}$-refinements will frequently occur in our proof, sometimes we will abuse notation and simply refer to them as refinements. 
\end{defn}

\subsection{The two-ends reduction}

\begin{defn}[Two-ends condition]\label{twoEndsDef}
Let $(\tubes,Y)$ be a set of $\delta$-tubes and their associated shading. We say that $(\tubes,Y)$ is $(\eps_1,C_1)$-two-ends if for all $T\in\tubes$ and all $\delta\leq r\leq 1$, we have
\begin{equation}\label{nonConcentrationInTube}
|\{Q\colon Q\subset Y(T)\cap B(x,r)\}|\leq r^{\eps_1}C_1\lambda_Y\delta^{-1}\quad\textrm{for all balls}\ B(x,r).
\end{equation}
\end{defn}

For $0\leq d\leq n$, $a>0$, and $0\leq b\leq 1$, define Assertion $\mathbf{TE}(d,a,b)$ to be the following statement:

\begin{twoEndsAss}
Let $\delta>0$ and let $(\tubes,Y)$ be a set of $\delta$-tubes in $\RR^4$ pointing in $\delta$-separated directions and their associated shading. Suppose that $(\tubes,Y)$ is $(\eps_1,100)$-two-ends. Then for each $\eps>0$,
\begin{equation}
\Big| \bigcup_{T\in\tubes}Y(T) \Big|\geq c\lambda_Y^{a}\delta^{4-d+\eps}(\delta^{3}|\tubes|)^b,
\end{equation}
where $c>0$ is a constant that depends only on $d,a,b,\eps,$ and $\eps_1$.
\end{twoEndsAss}

Note that if Assertion $\mathbf{TE}(d,a,b)$ is true, then Assertion $\mathbf{TE}(d,a^\prime,b)$ is also true for all $a^\prime\geq a$. The two-ends reduction says that Assertion $\mathbf{TE}(d,d,b)$ implies a Kakeya maximal function estimate at dimension $d$.
\begin{prop}[The two-ends reduction]\label{twoEndsConditionThm}
Suppose that $\mathbf{TE}(d,d,b)$ is true for some $1\leq d\leq 4$ and $0\leq b\leq 1$. Then for each $\eps>0$, there exists a constant $c_\eps$ so that the following holds. Let $(\tubes,Y)$ be a set of tubes pointing in $\delta$-separated directions and their associated shading. Then
\begin{equation}
\Big| \bigcup_{T\in\tubes}Y(T) \Big|\geq c_{\eps}\lambda_Y^{d}\delta^{4-d+\eps}(\delta^{3}|\tubes|)^b.
\end{equation}
\end{prop}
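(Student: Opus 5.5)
The plan is the standard two-ends reduction (as in Wolff and Katz--Zahl), which passes from an arbitrary shading to a rescaled two-ends configuration. First pigeonhole: after a $|\log\delta|^{-O(1)}$-refinement we may assume every $T\in\tubes$ has $|Y(T)|\sim\lambda|T|$ with $\lambda\sim\lambda_Y$. Losing such logarithmic factors only changes the constant $c_\eps$.

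For each tube $T$, choose the scale $r=r(T)\in[\delta,1]$ at which the shading is most concentrated relative to the two-ends bound. Concretely, among dyadic $r$ and balls $B(x,r)$, pick one nearly maximizing
\[
|\{Q\subset Y(T)\cap B(x,r)\}|\, r^{-\eps_1}.
\]
By maximality, the restricted shading $Y(T)\cap B(x,r)$, viewed as a shading of the $\delta\times r$ tube segment $T\cap B(x,r)$, satisfies the two-ends condition at scales between $\delta$ and $r$ with constant $\lesssim 1$ (we may take $100$). Its density relative to that segment is
\[
\lambda'\gtrsim \lambda\, r^{\eps_1-1}\geq\lambda .
\]
Pigeonholing $r$ over $O(|\log\delta|)$ dyadic values, we may assume $r(T)=r$ for all remaining tubes.

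Next, cover $\RR^4$ by a grid of $r$-balls (boundedly overlapping, slightly dilated). Pigeonhole so that each ball $B$ contains $\tubes_B$ segments of comparable number $M$. Rescale $B$ by $r^{-1}$. The segments become $\delta/r$-tubes with the two-ends shading just described. Their directions are $\delta$-separated, hence $\delta/r$-separated after grouping into at most $O(1)$ families, which are then thinned to be $\delta/r$-separated. Here lies the main obstacle: the directions are $\delta$-separated but not $\delta/r$-separated. The fix is to partition directions into $\delta/r$-caps. Each cap contains at most $(r)^{-3}\cdot$ tubes per cap direction, so we select one tube per cap per ball, or else apply $\mathbf{TE}$ separately to $\sim$ the required number of subfamilies and sum the resulting volume bounds. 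Then apply $\mathbf{TE}(d,d,b)$ at scale $\delta/r$ in each ball:
\[
|\cup Y\cap B|\gtrsim r^4\,\lambda'^{d}(\delta/r)^{4-d+\eps}\big((\delta/r)^3 M'\big)^b .
\]

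Finally, sum over balls. The balls are disjoint up to overlap $O(1)$, and the total number of segments satisfies $\sum_B M\gtrsim|\tubes|$ because each tube contributes a segment. We then need to check the exponents. The factor $\lambda'^d\geq \lambda^d r^{d(\eps_1-1)}$ gains $r^{-d}$, which compensates the loss $r^{4}\cdot r^{-(4-d)}$ from rescaling. The $b$-term needs concavity ($b\le1$) together with a bound on how many balls a direction family spreads over. Hölder over balls, using $b\leq1$ and the fact that the number of balls is at most $r^{-4}$, gives
\[
\sum_B M'^b\geq \Big(\sum_B M'\Big)^b.
\]
Together these should yield
\[
\lambda^d\delta^{4-d+\eps}(\delta^3|\tubes|)^b .
\]
I expect the direction-separation bookkeeping and this exponent accounting to be the delicate part. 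If it fails, the fallback is to choose $r$ via the alternative "maximal density" criterion, so that the $r^{\eps_1}$ gains absorb the remaining losses.
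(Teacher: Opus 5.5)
The paper does not prove this proposition; it refers to Katz--Zahl, and your outline is the standard two-ends reduction used there. Several parts are correct: the scale selection, the density gain $\lambda'\gtrsim\lambda r^{\eps_1-1}$, and the check $r^4\lambda'^d(\delta/r)^{4-d}\gtrsim\lambda^d\delta^{4-d}r^{d\eps_1}$ (which is why $a=d$ is needed). However, the step you flag as delicate is where the argument lives, and as written it does not close. Two of your claims there are false. First, $\delta$-separated directions do \emph{not} become $\rho$-separated ($\rho=\delta/r$) after splitting into $O(1)$ families, since a $\rho$-cap on $S^3$ can contain $\sim r^{-3}$ of them. Second, applying $\mathbf{TE}$ to each of $\sim r^{-3}$ direction-separated subfamilies and \emph{summing} the volume bounds is illegitimate. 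All these subfamilies live in the same ball $B$ and their unions can coincide, so you may only keep one of them.

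The missing observation is that this thinning costs exactly what the rescaling gains. Segments in a given ball come from distinct tubes. Keeping one per $\rho$-cap (after an $O(1)$-coloring of the caps) therefore leaves $M_B'\gtrsim r^3M_B$ segments with $\rho$-separated directions, and
\[
\big(\rho^3M_B'\big)^b\gtrsim\big(\delta^3r^{-3}\cdot r^3M_B\big)^b=\big(\delta^3M_B\big)^b .
\]
Subadditivity of $t\mapsto t^b$ for $0\le b\le 1$ then gives $\sum_B(\delta^3M_B)^b\ge(\delta^3\sum_BM_B)^b\gtrsim(\delta^3|\tubes|)^b$. No bound on the number of balls, or on how direction families spread over balls, is needed. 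Together with bounded overlap of the balls, this yields $|\bigcup Y(T)|\gtrsim\lambda^d\delta^{4-d+\eps}r^{d\eps_1-\eps}(\delta^3|\tubes|)^b$. The factor $r^{d\eps_1}$ is a loss, not a gain. It is absorbed by taking $\eps_1\le\eps/d$, which is allowed since $\mathbf{TE}$ holds for every $\eps_1$, so your fallback is unnecessary.

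There are two smaller points. After the first pigeonholing you cannot assume $\lambda\sim\lambda_Y$. After discarding tubes with $|Y(T)|<\lambda_Y|T|/2$, you only get $\lambda\gtrsim\lambda_Y$ and $\lambda|\tubes'|\gtrapprox\lambda_Y|\tubes|$. This suffices because $\lambda^d|\tubes'|^b=\lambda^{d-b}(\lambda|\tubes'|)^b$ and $d\ge b$. You should also pigeonhole the segment densities $\lambda'_T$, so that the two-ends bound holds with constant $\le 100$ relative to the \emph{average} density of the rescaled family. Finally, the $|\log\delta|^{O(1)}$ losses must be absorbed into $\delta^{\eps}$, not into $c_\eps$.
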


We refer the interested reader to \cite{KZ} if they wish to see the proof.

\subsection{The robust transversality reduction}

\begin{defn}[Robust transversality]\label{robustTransDef}
Let $(\tubes,Y)$ be a set of tubes and their associated shading. We say that $(\tubes,Y)$ is $(\eps_2,C_2)$-robustly transverse if for all $\delta$-cubes $Q$, all vectors $v$, and all $\delta\leq 1$, we have the following bound on the number of tubes passing through $Q$ and making small angle $r$ with $v$,
for any $r$ with $\delta < r <1$.
\begin{equation}\label{notTooConcentratedAngle}
|\{T\in\tubes_Y(Q)\colon \angle(v(T),v)\leq r\}|\leq r^{\eps_2}C_2\mu_Y.
\end{equation}
\end{defn}

\begin{robustTransAss}
Let $\delta>0$ and let $(\tubes,Y)$ be a set of $\delta$-tubes in $\RR^4$ pointing in $\delta$-separated directions and their associated shading. Suppose that $(\tubes,Y)$ is $(\eps_1,C_1)$-two-ends and $(\eps_2,100)$-robustly transverse. Then for each $\eps>0$,
\begin{equation}\label{twoEndsConditionRequirement}
\Big| \bigcup_{T\in\tubes}Y(T) \Big|\geq c\lambda_Y^{a}C_1^{-C/\eps_1}\delta^{4-d+\eps}(\delta^{3}|\tubes|)^b,
\end{equation}
where $C=C(d,a,b)$ is a constant that depends only on $d$, $a$, and $b$, and $c>0$ is a constant that depends only on $d,a,b,\eps,\eps_1,$ and $\eps_2$ (in particular, both constants are independent of $\delta$).
\end{robustTransAss}

Note that if Assertion $\mathbf{RT}(d,a,b)$ is true, then Assertion $\mathbf{RT}(d,a,b^\prime)$ is also true for all $b^\prime\geq b$. We have the following proposition.

\begin{prop}\label{robustTransReduction}
Suppose that Assertion $\mathbf{RT}(d,a,b)$ is true for some $1\leq d\leq 4$, $a\geq 0$, and $\frac{d-1}{3}\leq b\leq 1$. Then Assertion $\mathbf{TE}(d,a,b)$ is true.
\end{prop}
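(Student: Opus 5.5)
We argue by contradiction and induction on scales, as in Katz--Zahl's robust transversality reduction, which is modeled on the broad-narrow dichotomy of Bourgain--Guth. Fix $(\tubes,Y)$ that is $(\eps_1,100)$-two-ends with $\delta$-separated directions. After a $|\log\delta|^{-1}$-refinement we may assume every cube of $\mathcal{Q}(Y)$ has multiplicity comparable to $\mu_Y$. For each cube $Q$ we ask whether the tubes through $Q$ are robustly transverse at $Q$. If (\ref{notTooConcentratedAngle}) fails at $Q$, there is a cap of directions of radius $r$, with $\delta<r<1$, containing $\gtrsim r^{\eps_2}\mu_Y$ of the tubes through $Q$. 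Pigeonholing, we arrive at one of two situations.

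In the \emph{broad} case, for a positive fraction of the total shading, the cubes satisfy (\ref{notTooConcentratedAngle}). We pass to the corresponding refinement. Since the two-ends property degrades only by a constant factor $C_1$ under refinement, Assertion $\mathbf{RT}(d,a,b)$ applies directly. It gives the desired bound with loss $C_1^{-C/\eps_1}$, which is a constant here.

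In the \emph{narrow} case, after pigeonholing we may fix a single scale $r$ such that most cubes have a $r$-cap with $\gtrsim r^{\eps_2}\mu_Y$ tubes. We then cover $\RR^4$ by $r$-balls and partition the directions into $r$-caps. For each ball $B$ and cap $\tau$, the tubes through $B$ with directions in $\tau$ live, after the anisotropic rescaling that maps $r\times\cdots$ slabs to unit scale, as $\delta/r$-tubes. More precisely, we group tubes into fat $r$-tubes, rescale each fat tube to the unit ball, and apply the induction hypothesis $\mathbf{TE}(d,a,b)$ at scale $\delta/r$ inside each fat tube. We then apply $\mathbf{TE}(d,a,b)$, or a trivial Córdoba-type bound, at scale $r$ to the fat tubes themselves. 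The directions in a cap, after rescaling, remain $(\delta/r)$-separated. Two-ends is inherited, possibly after a refinement, with constant $C_1 r^{-\eps_1}$-type losses, and these are absorbed by the $r^{\eps_2}$ gain.

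The exponent bookkeeping is the main obstacle. Multiplying the two estimates, the factor $\lambda^a$ and $\delta^{4-d}$ combine correctly because these quantities are multiplicative across scales. The factor $(\delta^3|\tubes|)^b$ is the delicate one: the number of tubes per fat tube is at most $(r/\delta)^3$ out of $\delta^{-3}$, and concentration in the cap means fewer fat tubes. The term $(\delta^3|\tubes|)^b$ must beat the loss coming from the rescaling Jacobian. This is where $b\ge\frac{d-1}{3}$ is needed: the rescaled volume gains $r^{4-d}$-type factors against $r^{3b}$, up to a power $r^{1}$ from the short direction. Hence the inequality $3b\ge d-1$ makes the narrow contribution at least the required bound, up to an $r^{-\eps}$ loss that is compensated by choosing $\eps_2$ small relative to $\eps$.

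Finally, we iterate. The narrow case reduces to strictly larger $\delta/r\ge\delta^{1-\eps'}$, or to a robust one at scale $r$. Hence after $O(1/\eps')$ steps the induction closes, with $c$ depending only on $d,a,b,\eps,\eps_1,\eps_2$.
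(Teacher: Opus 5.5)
\textbf{There is a genuine gap: as written, the induction does not close.} Your outline has the right ingredients. Broad cubes go to $\mathbf{RT}(d,a,b)$, narrow cubes are handled by anisotropically rescaling fat $r$-tubes, and $3b\ge d-1$ enters through the factor $r^{d-1-3b}(N/M)^{1-b}\ge 1$, where $M\le N$ is the number of thin tubes in a fat tube. The failure is in how you close the argument, for three reasons.

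\emph{The two-ends constant.} In the narrow case you apply the induction hypothesis $\mathbf{TE}(d,a,b)$ inside each fat tube. That configuration is obtained by discarding the tubes outside the heavy cap and by dyadic pigeonholing. So it is only $(\eps_1, C r^{-O(\eps_2)}|\log\delta|^{O(1)})$-two-ends, whereas $\mathbf{TE}$ demands the constant $100$. The factor $r^{\eps_2}$ is a loss, not a gain that could absorb this.

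\emph{The scale gain.} The only gain from passing to scale $\delta/r$ is $r^{-\eps}$, and nothing makes $r$ small. Once $|\tubes_Y(Q)|\sim\mu_Y$, failure of robust transversality forces only $r\le c(\eps_2)$, a fixed constant. The $|\log\delta|^{-1}$ losses from pigeonholing $r$ and uniformizing multiplicities at every step are then not absorbed. Your claim that the narrow case always lands at $\delta/r\ge\delta^{1-\eps'}$ is unjustified, and the alternative ``robust at scale $r$'' is never defined.

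\emph{The coarse-scale step.} Do not apply $\mathbf{TE}$ to the fat tubes. Shading density is preserved, not split, under tube rescaling, so this would produce a second factor of $\lambda^a$. Once you keep only heavy-cap tubes, each cube meets $O(1)$ fat tubes, and only $M\le N$ is needed at the coarse level.

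\textbf{The missing idea is an extremal choice of the concentration scale, which removes the induction entirely.}
\begin{enumerate}
\item After uniformizing multiplicities, choose for each $Q$ a pair $(r_Q,v_Q)$ maximizing $\rho^{-\eps_2}|\{T\in\tubes_Y(Q)\colon\angle(v(T),v)\le\rho\}|$ over $\delta\le\rho\le 1$ and all $v$.
\item The value at $\rho=1$ is $\sim\mu_Y$, so the heavy cap contains $\gtrsim r_Q^{\eps_2}\mu_Y$ tubes. By maximality, every cap of radius $\rho\le r_Q$ contains at most $2(\rho/r_Q)^{\eps_2}$ times as many tubes as the heavy cap.
\item Pigeonhole $r_Q\sim r$, keep only heavy-cap tubes, and rescale each fat $r$-tube. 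After the usual pigeonholing, the resulting configuration of $\delta/r$-tubes in $\delta/r$-separated directions is robustly transverse by construction.
\item So $\mathbf{RT}(d,a,b)$ itself applies at scale $\delta/r$. Its factor $C_1^{-C/\eps_1}$ absorbs the degraded two-ends constant at a cost of $r^{O(\eps_2/\eps_1)}$.
\item Sum over the essentially disjoint fat tubes. The gain $r^{-\eps}$ beats $r^{O(\eps_2/\eps_1)+a\eps_2}$ once $\eps_2\ll\eps\eps_1$. The logarithmic losses occur only once and are absorbed by $\delta^{\eps}$.
\end{enumerate}
The case $r\sim 1$ is exactly your broad case.
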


Again, we refer the interested reader to \cite{KZ} if they wish to see the proof.

\section{Previous Kakeya-type estimates in $\RR^4$}

We recall some results that will be needed in our proof.

\subsection{Wolff's hairbrush estimate}
In \cite{W}, Wolff proved new Kakeya maximal function estimates using a geometric argument called the ``hairbrush'' argument. Here is a consequence of Wolff's hairbrush argument. The formulation presented here is described in \cite{T3}, and for the proof one can look at \cite{T3} or \cite{KZ}.

\begin{thm}\label{wolffThm}
Assertion $\mathbf{RT}(3,2,1/2)$ is true.
\end{thm}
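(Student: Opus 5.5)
We prove Assertion $\mathbf{RT}(3,2,1/2)$ by Wolff's hairbrush argument, adapted to shadings. Writing $E=\bigcup_{T\in\tubes}Y(T)$, we must show
\[
|E|\gtrsim \lambda_Y^{2}\delta^{1+\eps}(\delta^3|\tubes|)^{1/2}.
\]
First we pass to a $|\log\delta|^{-1}$-refinement, losing only $\delta^{-\eps}$ factors, so that every cube $Q\in\mathcal{Q}(Y)$ has multiplicity $|\tubes_Y(Q)|\sim\mu$ and every tube has $|Y(T)|\sim\lambda|T|$. The two-ends hypothesis is robust under such refinements after dyadic pigeonholing, at the cost of changing $C_1$ by logarithmic factors, and so is robust transversality. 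Double counting gives $|E|\,\mu\sim\lambda\delta^3|\tubes|$. It therefore suffices to bound $\mu$ from above, namely to prove $\mu\lesssim \delta^{-\eps}\lambda^{-1}\delta^{-2}(\delta^3|\tubes|)^{1/2}$.

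Next we build a hairbrush. Counting incidences shows there is a stem tube $T_0$ such that the tubes meeting $Y(T_0)$ at an angle $\gtrsim\theta$ number $\gtrsim \lambda\delta^{-1}\mu$. The two-ends condition is used here: most of the shading of $T_0$ is spread over distinct $\delta$-cubes along $T_0$, so the count is not dominated by repeated cubes. Robust transversality is used to show that at most cubes a positive fraction of the $\sim\mu$ tubes through the cube make angle $\gtrsim 1$ with $T_0$, after pigeonholing $\theta\sim1$ up to logs. This yields a hairbrush $H(T_0)$ with $|H|\gtrsim\lambda\delta^{-1}\mu$ bristles, each making a large angle with $T_0$.

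We then organise the bristles into 2-planes through $T_0$: each bristle lies in the $\delta$-neighborhood of a 2-plane containing the axis of $T_0$. The bristles are in $\delta$-separated directions, and by a Córdoba-type $L^2$ argument in each plane slab they have bounded overlap away from $T_0$. Using the two-ends condition once more, each bristle's shading has $\gtrsim\lambda\delta^{-1}$ cubes at distance $\gtrsim\lambda$ from $T_0$, once we discard the shading within $r\sim\lambda^{1/\eps_1}$-balls, hence at least within a constant multiple of $\lambda$ in a polynomial sense. Hence
\[
|E|\ \ge\ \Big|\bigcup_{T\in H}Y(T)\setminus N_{c\lambda}(T_0)\Big|\ \gtrsim\ \lambda^{2}\,\delta^{3}|H|\cdot(\text{overlap loss}).
\]
In the plane-slab geometry, the number of bristles in one slab is at most $\delta^{-1}$, and distinct slabs are disjoint away from the stem. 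In $\RR^4$, the Wolff-type estimate $|\bigcup|\gtrsim\lambda^2\delta^3|H|$ then follows. Combining this with $|H|\gtrsim\lambda\delta^{-1}\mu$ and the trivial bound $|E|\ge\lambda\delta^3|\tubes|/\mu$, and taking the geometric mean of the two lower bounds, gives
\[
|E|\gtrsim\lambda^2\delta^{1}(\delta^3|\tubes|)^{1/2}
\]
up to $\delta^{-\eps}$, which is the claimed exponent pattern $d=3$, $a=2$, $b=1/2$.

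The main obstacle is the overlap estimate for bristles away from the stem. Two bristles in the same 2-plane through $T_0$ can intersect at many points along a line, and controlling this uses the two-dimensional Kakeya ($L^2$) bound in each plane slab together with the $\delta$-separation of directions. The delicate part is making this uniform when the shadings are only $\lambda$-dense, and ensuring the $C_1^{-C/\eps_1}$ loss from the two-ends step stays polynomial. If the two-ends step forces only distance $\lambda^{C/\eps_1}$ from the stem, I would absorb this into the allowed $C_1^{-C/\eps_1}$ constant. This is exactly the formulation in \cite{T3,KZ}, which I would follow.
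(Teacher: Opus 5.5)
Your route is the standard Wolff hairbrush argument. That is what the paper relies on: it gives no proof of this statement and refers to \cite{T3,KZ}. Your final bookkeeping is right: $|\bigcup_{T\in H}Y(T)|\gtrsim\lambda^2\delta^3|H|$, $|H|\gtrsim\lambda\delta^{-1}\mu$, and $|E|\gtrsim\lambda\delta^3|\tubes|/\mu$, combined by a geometric mean. There is one slip. The reduction should be to $\mu\lesssim\delta^{-\eps}\lambda^{-1}\delta^{-1}(\delta^3|\tubes|)^{1/2}$, not $\delta^{-2}$. With $\delta^{-2}$ you would only get $|E|\gtrsim\lambda^2\delta^2(\delta^3|\tubes|)^{1/2}$. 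Your own closing computation already produces the correct power.

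The real problem is the two-ends step you flag as delicate, which fails as you wrote it.

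\textbf{The correct threshold.} The two-ends inequality is normalized by $\lambda_Y$. So once you refine to $|Y(T)|\gtrsim\lambda|T|$ for every $T$ (changing $C_1$ only by a $|\log\delta|$ factor), take $r_0\sim(C_1|\log\delta|)^{-1/\eps_1}$. Then at least half the cubes of every bristle's shading lie outside the ball of radius $r_0$ around the point where the bristle crosses $T_0$. Robust transversality gives each bristle an angle $\ge c_0(\eps_2)$ with $T_0$, at every stem cube of multiplicity $\gtrsim\mu$; no pigeonholing in $\theta$ is needed. So that half of the shading lies at distance $s\gtrsim c_0r_0$ from the stem, a threshold independent of $\lambda$.

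\textbf{Why this matters.} At distance $s$ from the stem, the $\delta$-slabs around 2-planes through the stem, indexed by $\delta$-separated parameters, overlap about $s^{-2}$ times. With the $s$ above, the overlap is $\lesssim c_0^{-2}(C_1|\log\delta|)^{2/\eps_1}$. This is absorbed into $C_1^{-C/\eps_1}\delta^{\eps}$, and Córdoba's $L^2$ bound in each slab then gives exactly the factor $\lambda^2$.

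\textbf{Why your version fails.} Your threshold $s\sim\lambda$ (or $\lambda^{C/\eps_1}$) costs an overlap factor $\lambda^{-2}$ (or $\lambda^{-2C/\eps_1}$). That gives a worse exponent than $a=2$; for instance $a=3$ when $s\sim\lambda$. Your remedy of absorbing $\lambda^{C/\eps_1}$ into $C_1^{-C/\eps_1}$ is not allowed, since $\lambda$ can be as small as a power of $\delta$ and the constants must not depend on it.

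Finally, it is transversality, not the two-ends condition, that prevents over-counting bristles along $T_0$. A bristle at angle $\ge c_0$ meets only $O(1/c_0)$ cubes of $T_0$.
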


\subsection{\L{}aba-Tao's X-ray estimate}
We recall Theorem 1.2 from \cite{LT}:
\begin{thm}\label{LabaTaoTheorem}
Let $\tubes$ be a set of essentially distinct $\delta$-tubes in $\RR^4$. Suppose that for each vector $v$, there are at most $m$ tubes from $\tubes$ with $\angle(v,v(T))\leq\delta$. Then for each $\eps>0$, there is a constant $C_\eps$ so that
\begin{equation}
\Big\Vert \sum_{T\in\tubes} \chi_T \Big\Vert_{3/2}\leq C_{\eps} \delta^{-\frac{1}{3}-\eps}m^{\frac{5}{36}}(\delta^3|\tubes|)^{\frac{7}{9}}.
\end{equation}
\end{thm}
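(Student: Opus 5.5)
This is Theorem 1.2 of \cite{LT}, and I would follow the \L{}aba--Tao strategy: a restricted-type reduction followed by a hairbrush argument in which the direction-separation hypothesis is replaced by the multiplicity parameter $m$. I have not checked that the exponents $\frac{5}{36}$ and $\frac{7}{9}$ come out of this scheme; that bookkeeping is the part I expect to be hardest.

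\textbf{Reduction to a set-size bound.} First I would reduce the $L^{3/2}$ bound, by duality and dyadic pigeonholing, to a restricted weak-type statement.
\begin{itemize}
\item Pigeonhole the values of $\sum_T\chi_T$ into dyadic levels $\mu$.
\item Pass to a $|\log\delta|^{-1}$-refinement $(\tubes',Y)$. In it every cube of $\mathcal{Q}(Y)$ has multiplicity $\sim\mu$, and every tube has shading density $\sim\lambda$.
\item Losing only $\delta^{-\eps}$, it then suffices to bound $\lambda\mu^{1/3}|\tubes|\delta^3$ appropriately. Equivalently, we need a lower bound for $|\bigcup Y(T)|\approx \lambda\delta^3|\tubes|/\mu$ in terms of $\lambda$, $m$ and $\delta^3|\tubes|$.
\end{itemize}
Direction multiplicity is preserved under passing to subsets, so it survives all refinements. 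I would also use the standard two-ends and bilinear reductions, in the spirit of Proposition~\ref{twoEndsConditionThm} and Proposition~\ref{robustTransReduction}, so that through a typical point the tubes make angle $\gtrsim 1$ (up to $\delta^{\eps}$ losses) with one another.

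\textbf{Hairbrush count.} Next I would run a Wolff hairbrush argument.
\begin{itemize}
\item By double counting, find a stem tube $T_0$ whose shading meets $\gtrsim \lambda\delta^{-1}\mu$ other tubes transversally. Call these tubes the hairbrush $H(T_0)$.
\item Foliate $H(T_0)$ by the $\sim\delta^{-2}$ families of tubes lying in $\delta$-neighborhoods of 2-planes containing $T_0$.
\item Inside each such slab, apply the planar (two-dimensional) estimate. With multiplicity $m$ per direction it reads: $k$ tubes in a plane, at most $m$ per $\delta$-direction, cover area $\gtrsim \lambda^{O(1)}\min(k\delta,\ m^{-1/2}(k\delta)^{1/2}\cdots)$, via C\'ordoba's $L^2$ argument. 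This is where $m$ enters.
\item Away from $T_0$ the slabs are essentially disjoint, so the volume of $\bigcup Y(T)$ is at least the sum of the slab contributions.
\end{itemize}

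\textbf{Combining the bounds.} This gives one lower bound for $|\bigcup Y(T)|$. A second, trivial bound comes from the fact that at most $m\delta^{-3}$ tubes exist in total, together with Cauchy--Schwarz/C\'ordoba counting of pairwise intersections. I would optimize between the hairbrush bound and the trivial bound, then interpolate back to the strong $L^{3/2}$ estimate.

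\textbf{Expected obstacle.} The main difficulty is making the slab decomposition respect the multiplicity. Within a plane through $T_0$, up to $m$ parallel tubes may share a direction, so the planar estimate must be the X-ray version rather than the Kakeya one. One must then check that the losses, which are powers of $m$, sum to exactly $m^{5/36}$ after the final interpolation. To pin this down, I would first verify the exponents in the extremal configurations: $m=1$ should recover a Wolff-type bound, and $m=\delta^{-1}$ (all tubes in a slab) should be sharp for the trivial bound. I would then tune the pigeonholing to match.
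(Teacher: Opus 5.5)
\textbf{Relation to the paper.} The paper gives no proof of this statement. It is imported from \L{}aba--Tao \cite{LT} (Theorem 1.2 there) and used as a black box, so your opening sentence is all the justification the paper itself offers.

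\textbf{The sketch has a genuine gap.} The outline you append does not prove the estimate, and what is missing is not just bookkeeping. In set-size form (uniform multiplicity $\mu$, shading density $\lambda$, $E=\bigcup_T Y(T)$, so $\mu|E|\approx\lambda\delta^3|\tubes|$), the theorem says
$|E|\gtrsim\lambda^3\delta^{1+\eps}m^{-5/12}(\delta^3|\tubes|)^{2/3}$. This is the form used in Corollary~\ref{KTCor}. Your reduction should therefore bound $\mu^{1/2}\lambda\delta^3|\tubes|$, not $\lambda\mu^{1/3}\delta^3|\tubes|$.

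The right planar X-ray bound in your hairbrush step is C\'ordoba's: $k$ tubes in a $2$-plane slab, at most $m$ per direction, have union of area $\gtrsim\lambda^2k\delta/(m\log(1/\delta))$. A bound of the form $\min(k\delta,m^{-1/2}(k\delta)^{1/2})$, as you wrote it, is false. Moreover, the loss of a full factor $m$ is sharp even for tubes crossing a common stem. To see this, take $m$ strips, each made of $m$ adjacent parallel tubes, all passing through one point of $T_0$ in $\delta$-separated directions. These $k=m^2$ tubes cover area only $\sim m\delta$, whereas $\min(m^2\delta,(m\delta)^{1/2})\gg m\delta$. Summing over the $\sim\delta^{-2}$ slabs, your hairbrush gives $|E|\gtrsim\lambda^{O(1)}\delta^2\mu/m$, that is, $|E|\gtrsim\lambda^{O(1)}\delta\,m^{-1/2}(\delta^3|\tubes|)^{1/2}$.

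\textbf{A test configuration.} Take $m\sim\delta^{-1}$ tubes in each of $\sim\delta^{-3}$ directions, with $\lambda\sim 1$, so $\delta^3|\tubes|\sim\delta^{-1}$. The theorem asserts $|E|\gtrsim\delta^{3/4+\eps}$. Your ingredients give the following:
\begin{itemize}
\item the hairbrush gives $\delta$;
\item C\'ordoba's $L^2$ bound in $\RR^4$ gives $\delta^2$;
\item disjointness of parallel tubes gives $m\delta^3=\delta^2$;
\item the bush gives $\delta^{3/2}(\delta^3|\tubes|)^{1/2}=\delta$;
\item the fact $|\tubes|\le m\delta^{-3}$ is only a constraint on the parameters, not a lower bound for $|E|$.
\end{itemize}
Every ingredient is $\lesssim\delta$ here, so no optimization or interpolation among them can reach $\delta^{3/4}$.

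\textbf{What is missing.} You need a mechanism that \emph{gains} from parallel tubes clustering into fatter tubes, which are exactly the configurations that make the planar loss sharp, rather than merely tolerating them. For example, one could pass to the coarser scale and apply an estimate of the same type there, tracking how $m$ transforms. This is where the substance of \cite{LT} lies.

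Relatedly, Propositions~\ref{twoEndsConditionThm} and~\ref{robustTransReduction} are stated only for tubes in $\delta$-separated directions. In the X-ray setting they would have to be re-proved with $m$ carried through the change of scale; you cannot simply invoke them.

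For this paper the citation is the entire proof, and your outline should not be taken as a substitute for it.
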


It will be more convenient for us to phrase this result as a multiplicity bound on unions of tubes.

\begin{cor}\label{KTCor}
Let $(\tubes,Y)$ be a set of essentially distinct $\delta$-tubes and their associated shading, and let $\Omega\subset S^3$ be a $\delta$-separated set. Define $m = |\tubes|/|\Omega|$. Suppose that each tube from $\tubes$ points in a direction from $\Omega$, and $\sim m$ tubes from $\tubes$ point in each direction from $\Omega$. Suppose $|\tubes_Y(Q)|\leq 10\mu_Y$ for all $Q\in\mathcal{Q}(Y)$. Then 
\begin{equation}\label{muYBound}
\mu_Y \leq C_{\eps} \lambda^{-2} \delta^{-1-\eps}m^{3/4}(\delta^3|\Omega|)^{1/3}.
\end{equation}
\end{cor}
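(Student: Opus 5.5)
The plan is to derive \eqref{muYBound} from Theorem~\ref{LabaTaoTheorem} by a Hölder argument, comparing the total shading mass with the $L^{3/2}$ norm of $\sum_T\chi_T$ over the set $E=\bigcup_{T\in\tubes}Y(T)$. Throughout, write $\lambda=\lambda_Y$ and $\mu=\mu_Y$.

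\emph{Step 1: Laba--Tao applies with multiplicity $O(m)$.} Fix a vector $v$. The directions in $\Omega$ are $\delta$-separated, so only $O(1)$ of them lie within angle $\delta$ of $v$. Each direction carries $\sim m$ tubes, so at most $O(m)$ tubes satisfy $\angle(v,v(T))\le\delta$. Hence Theorem~\ref{LabaTaoTheorem} applies with $m$ replaced by $O(m)$, and the constant is absorbed into $C_\eps$.

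There is a minor technicality: a cube $Q\subset Y(T)$ need only intersect $T$, not lie inside it. To handle this, I replace each $T$ by a concentric tube $\tilde T$ of radius $3\delta$, which contains every $\delta$-cube meeting $T$. Theorem~\ref{LabaTaoTheorem} applied at scale $3\delta$ (after splitting into $O(1)$ essentially distinct subfamilies if necessary) gives the same bound for $f=\sum_{T}\chi_{\tilde T}$ up to constants.

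\emph{Step 2: two estimates for $\int_E f$.} For the lower bound, $f\ge |\tubes_Y(Q)|$ on each $Q\in\mathcal Q(Y)$, so
\[
\int_E f\ \ge\ \sum_{Q\in\mathcal Q(Y)}|\tubes_Y(Q)|\,\delta^4=\sum_{T}|Y(T)|\ \sim\ \lambda\,\delta^3|\tubes|.
\]
The same double counting gives
\[
|E|=|\mathcal Q(Y)|\,\delta^4=\frac{\sum_T|Y(T)|}{\mu}\ \sim\ \frac{\lambda\,\delta^3|\tubes|}{\mu}.
\]
This uses only the definition of $\mu$ as an average; the hypothesis $|\tubes_Y(Q)|\le 10\mu$ is not even needed for it. For the upper bound, Hölder gives $\int_E f\le |E|^{1/3}\|f\|_{3/2}$.

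\emph{Step 3: combine.} Putting the two estimates together,
\[
\lambda\,\delta^3|\tubes|\ \lesssim\ \Big(\frac{\lambda\,\delta^3|\tubes|}{\mu}\Big)^{1/3} C_\eps\,\delta^{-1/3-\eps}m^{5/36}(\delta^3|\tubes|)^{7/9}.
\]
Solving for $\mu$ gives
\[
\mu\ \lesssim\ C_\eps^3\,\lambda^{-2}\,\delta^{-1-3\eps}\,m^{5/12}\,(\delta^3|\tubes|)^{1/3}.
\]
Finally substitute $|\tubes|\sim m|\Omega|$, so that $(\delta^3|\tubes|)^{1/3}\sim m^{1/3}(\delta^3|\Omega|)^{1/3}$. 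The total power of $m$ is $\frac{5}{12}+\frac{1}{3}=\frac34$. Renaming $3\eps$ as $\eps$ then yields \eqref{muYBound}.

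The only mildly delicate point is the technicality in Step~1, namely that shadings live on cubes meeting $T$ rather than inside $T$. The fattening of the tubes handles it at the cost of constants. The rest is exponent bookkeeping.
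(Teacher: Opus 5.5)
Your proof is correct and matches the intended derivation. The paper gives no separate argument; it simply recasts Theorem~\ref{LabaTaoTheorem} as in Katz--Zahl via the H\"older step $\lambda_Y\delta^3|\tubes|\lesssim\int_E\sum_T\chi_T\le|E|^{1/3}\|\sum_T\chi_T\|_{3/2}$ combined with $|E|\,\mu_Y\sim\lambda_Y\delta^3|\tubes|$, and your exponent bookkeeping ($m^{5/12}\cdot m^{1/3}=m^{3/4}$), the fattening to handle cubes that merely meet $T$, and your observation that the hypothesis $|\tubes_Y(Q)|\le 10\mu_Y$ is not needed are all sound.
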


\section{Applying Wang-Zahl inside a polynomial grain}

In this section, we will be working with tubes which are contained in the $\delta$-neighborhood of a low degree variety in $\RR^4$ (also called a grain, see Definition \ref{defgr}). After various technical manipulations, we will apply the 3-dimensional Wang-Zahl result \cite{WZ3} to this configuration of tubes. This is the key new ingredient in the proof, eventually yielding the improved dimension bound.

\subsection{Preliminaries}
A set of tubes is said to satisfy the Wolff axioms if not too many tubes from the set can concentrate into the thickened neighborhood of an affine subspace. More precisely, we have the following.
\begin{defn}
    Let $\tubes$ be a collection of $\delta$-tubes. We say that it satisfies the linear Wolff axioms if at most $Ct_1 t_2\ldots t_{n-1}\delta^{1-n}$ tubes are contained in a rectangular prism of dimensions $1\times t_1 \times t_2 \times \ldots t_{n-1}$.
\end{defn}

A set of tubes is said to satisfy the polynomial Wolff axioms if not too many tubes from the set can concentrate into the thickened neighborhood of an algebraic variety, or more generally a semi-algebraic set. This will play a crucial role in reaching our objective.

\begin{defn}
    Let $\tubes$ be a collection of $\delta$-tubes. We say that it satisfies the polynomial Wolff axioms if for every semialgebraic set $S\subset \RR^n$ of complexity at most $E$, and every $\delta\leq \lambda \leq 1$,
    \begin{equation}
        |\{ T\in \tubes : |T\cap S|\geq \lambda |T| \} | \leq K_E |S| \delta^{1-n} \lambda^{-n}.
    \end{equation}
\end{defn}

In \cite{Z} and \cite{KR}, Zahl, and independently Katz and Rogers showed that collections of direction-separated tubes satisfy these requirements for $n=4$. Here is the formal statement.

\begin{thm}\label{katzrog}
    Let $n, E\geq 2$ be integers and $\epsilon>0$. Then there is a constant $C(n,E,\epsilon)>0$ so that, for every set $\tubes$ of $\delta$-tubes in $\RR^n$, pointing in $\delta$-separated directions.
    \begin{equation}
        \# (\{T\in \tubes: |T\cap S|\geq \lambda |T|\})\leq C(n,E,\epsilon)|S|\delta^{1-n-\epsilon}\lambda^{-n}
  \end{equation}
  whenever $S$ is a semi-algebraic set of complexity at most $E$, and $\lambda\geq \delta>0$.
\end{thm}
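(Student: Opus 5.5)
The plan is to follow the Katz--Rogers strategy: reduce the problem to counting the directions of tubes that lie mostly inside $S$, and then bound that direction set using real algebraic geometry.

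\textbf{Reductions.} Let $\tubes_S=\{T\in\tubes: |T\cap S|\geq\lambda|T|\}$. Since the directions are $\delta$-separated, $|\tubes_S|$ is comparable to the $\delta$-covering number of the direction set $\Omega_S=\{v(T): T\in\tubes_S\}\subset S^{n-1}$. It therefore suffices to show that $\Omega_S$ is covered by $\lesssim_{E,\epsilon} |S|\delta^{1-n-\epsilon}\lambda^{-n}$ caps of radius $\delta$.

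First I would replace $S$ by a semialgebraic set of bounded complexity that records, for each line, how much of it lies in $S$. Concretely, parametrize unit segments by $(x,v)\in \RR^n\times S^{n-1}$ and set
\[
Z=\{(x,v): |\ell_{x,v}\cap N_\delta(S)|\gtrsim\lambda\}.
\]
The condition $|T\cap S|\geq\lambda|T|$ means that a $\lambda$-proportion of the lines in $T$ spend length $\gtrsim\lambda$ in $S$ (up to the $\delta$-thickening). The set $Z$ is not literally semialgebraic, because it involves a length measure. I would handle this by discretizing the segment into $\sim\lambda^{-1}$ pieces and requiring $\gtrsim 1$ of them to meet $S$. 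This gives a Boolean combination of projections of semialgebraic sets, and by Tarski--Seidenberg its complexity is $O_{E,\lambda}(1)$. Since the complexity must be independent of $\lambda$, I would use a dyadic pigeonholing instead: choose $\sim1$ fixed sub-intervals (of the $\lambda^{-1}$-partition) where the mass concentrates, which loses only a $\lambda^{-O(1)}$ factor that the $\lambda^{-n}$ can absorb. An alternative is to first cut $S$ into pieces along a polynomial partition.

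\textbf{Main step.} The main step is a volume estimate for the set of directions. Using Fubini, for each $v\in\Omega_S$ the set of basepoints $x$ with $(x,v)\in Z$ has $(n-1)$-dimensional measure $\gtrsim\lambda\delta^{n-1}$. Integrating over the tubes then gives
\[
\lambda\,|S|\gtrsim \int_{S^{n-1}}|\{x:(x,v)\in Z\}|\,dv,
\]
and the naive Fubini bound controls the number of tubes only as $|S|\delta^{-n+1}\lambda^{-1}$ times the average overlap. Overlap is exactly the difficulty: without algebraicity, many tubes could pack into a small $S$. I would therefore use the semialgebraic structure of the projection $\pi(Z)\subset S^{n-1}$, which is a bounded-complexity semialgebraic set. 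The key lemma, following Katz--Rogers, is that a bounded-complexity semialgebraic family of segments, each lying in $S$ with density $\lambda$, cannot have directions filling a $\delta$-dense set of measure much larger than $|S|\lambda^{-n}\delta^{-\epsilon}$. To prove it, I would use the Gromov/Yomdin algebraic lemma (semialgebraic cell decomposition into pieces with bounded $C^r$ norm). Each cell is parametrized by a map from a cube into $Z$, and on each cell the map $(x,v)\mapsto v$ behaves like a polynomial of bounded degree. A Jacobian/coarea argument then compares $|\Omega_S|_\delta$ to $|S|$, with a loss of $\lambda^{-n}$ coming from the $n$ directions of the segment sweeping within $S$.

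\textbf{Obstacle.} The hard part is this last coarea step at scale $\delta$. The cells of the Yomdin decomposition may be thin, and the $\delta^{-\epsilon}$ loss must come from iterating the decomposition over $\sim\epsilon^{-1}$ dyadic scales, in the style of an induction on scales. I would induct on $\delta$. At each step, either the directions are spread out, in which case the Jacobian bound applies directly, or they concentrate near a lower-dimensional semialgebraic subset of directions, in which case we pass to that subset and lose only a constant factor each time, for a total loss of $\delta^{-\epsilon}$.
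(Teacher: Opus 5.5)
The paper gives no proof of this statement; it quotes it from Zahl and from Katz--Rogers. So your sketch has to stand on its own. The overall shape is right (reduce to a bounded-complexity semialgebraic set of good directions), but there is a genuine gap at the centre. Your ``key lemma'' is the theorem restated, and the mechanism you offer for it uses the wrong map. The Jacobian of the projection $(x,v)\mapsto v$ says nothing about $|S|$. What is needed is a \emph{lower} bound for the Jacobian of a map into $\RR^n$ whose image lies in $S$.

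Concretely, let $D$ be the set of directions $v$ for which some line $\ell$ in direction $v$ has $|\ell\cap S|\geq c\lambda$. This set is semialgebraic of complexity $O_{n,E}(1)$, uniformly in $\lambda$. The reason is that each slice $\{t: x+tv\in S\}$ is a union of $O_E(1)$ intervals whose endpoints vary semialgebraically. So your claim that $Z$ is not semialgebraic is wrong. Your discretization workaround is also harmful: an extra $\lambda^{-O(1)}$ cannot be ``absorbed'' when the target exponent is exactly $\lambda^{-n}$.

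Choose semialgebraically one good line per direction, written in a chart as $\{(y(w)+sw,s): s\in\RR\}$ with $w\in\RR^{n-1}$, and put $\Phi(w,s)=(y(w)+sw,s)$.
\begin{enumerate}[label=(\roman*)]
\item On the open cells where $y$ is $C^1$ (which cover the chart's portion of $D$ up to a null set), $\det D\Phi(w,s)=\det(Dy(w)+sI_{n-1})$. This is a monic polynomial of degree $n-1$ in $s$, however large $Dy$ is.
\item By the sublevel-set bound $|\{s:|p(s)|\leq\eta\}|\lesssim\eta^{1/(n-1)}$ for monic $p$, the determinant exceeds $c\lambda^{n-1}$ on a set of $s$ of measure $\gtrsim\lambda$ on which $\Phi(w,s)\in S$.
\item $\Phi$ has $O_{n,E}(1)$ preimages on $\{\det D\Phi\neq 0\}$, since these are discrete fibres of a bounded-complexity semialgebraic family.
\item The area formula then gives $|S|\gtrsim\lambda^{n-1}\cdot\lambda\cdot|D|$.
\end{enumerate}
This nondegeneracy of the Jacobian along each line is the idea your proposal is missing. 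It is also where $\lambda^{-n}$ comes from: $\lambda$ from the length in $S$ and $\lambda^{n-1}$ from the Jacobian threshold, not from ``$n$ directions of the segment sweeping within $S$''.

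Your passage to scale $\delta$ is also not an argument as written. In the ``concentrated'' alternative, once the directions lie on a lower-dimensional set, a bound on $|D|$ is vacuous. You give no replacement relating the $\delta$-covering number of that set to $|S|$, and it is unclear what is being inducted. In fact this case never arises. Suppose $|T\cap S|\geq\lambda|T|$ and $\angle(v',v(T))\leq c\delta$. Then $T$ projects along $v'$ onto a set of $(n-1)$-measure $\lesssim\delta^{n-1}$, and each line in direction $v'$ meets $T$ in length $\lesssim 1$. Fubini in direction $v'$ applied to $S\cap T$ therefore produces a line in direction $v'$ with $|\ell\cap S|\gtrsim\lambda$. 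Hence $D$ contains the pairwise disjoint caps $B(v(T),c\delta)$, one for each $T\in\tubes_S$, and so $|\tubes_S|\lesssim\delta^{1-n}|D|\lesssim\delta^{1-n}\lambda^{-n}|S|$. No multiscale induction is needed.

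There are two smaller errors.
\begin{enumerate}[label=(\roman*)]
\item Defining $Z$ through $N_\delta(S)$ would put $|N_\delta(S)|$ on the right-hand side. This can exceed $|S|$ by a factor $\lambda^{-1}$ or more, for example for a slab of width $\lambda\delta$.
\item Your Fubini display is reversed. For fixed $v$ one has $\lambda|\{x:(x,v)\in Z\}|\leq|S|$, so the integral is $\lesssim\lambda^{-1}|S|$. Integrating over $v$ only helps once the fibres are known to be large on whole caps of directions, which is exactly the tilted-Fubini step above.
\end{enumerate}
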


Next we state a special case of the theorem of Wang and Zahl \cite {WZ3} which we will be using.
\begin{thm}\label{wangzahlthm}
    Let $\tubes$ be a collection of $\delta$-tubes in $\RR^3$ which satisfies the linear Wolff axioms. Then we get
    \begin{equation}
         \Bigg| \bigcup_{T\in \tubes} T \Bigg|\geq C_{\epsilon} \delta^\epsilon (\delta^2|\tubes|)
    \end{equation}
\end{thm}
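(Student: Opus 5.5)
The plan is to deduce Theorem \ref{wangzahlthm} directly from the main theorem of Wang and Zahl \cite{WZ3}, by checking that the linear Wolff axioms imply the non-concentration hypothesis used there. That theorem is stated for families of $\delta$-tubes in $\RR^3$ satisfying the \emph{Katz--Tao convex Wolff axioms} with error $K$: for every convex set $W\subset\RR^3$,
\[
\#\{T\in\tubes\colon T\subset W\}\leq K\,|W|/|T|\sim K|W|\delta^{-2}.
\]
Under this hypothesis the conclusion is $\big|\bigcup_{T\in\tubes}T\big|\geq C_\epsilon K^{-O(1)}\delta^{\epsilon}\,\delta^2|\tubes|$. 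With $K=O(1)$ this is exactly the claimed bound.

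The key step is to show that linear Wolff implies Katz--Tao convex Wolff with $K=O(C)$. Fix a convex set $W$. If $W$ contains no $\delta$-tube of $\tubes$, there is nothing to prove.

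Otherwise, apply John's ellipsoid theorem to $W$. This gives a rectangular box $P$ with $W\subset P$ and $|P|\lesssim|W|$; write its side lengths as $s_1\geq s_2\geq s_3$.

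Since $W$ contains a $\delta$-tube, it contains a unit segment and has width at least $\sim\delta$ in every direction. Hence $s_1\gtrsim 1$ and $s_3\gtrsim\delta$. We may also assume $s_1\lesssim 1$: otherwise, cover $P$ by $O(s_1)$ boxes of length $\sim 1$, in such a way that every unit tube in $P$ lies in one of $O(1)$ enlargements of them, and sum the bounds.

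So $P$ is, up to a constant dilation, a prism of dimensions $1\times t_1\times t_2$ with $t_1t_2\sim s_2s_3\sim|W|$. The linear Wolff axioms then give
\[
\#\{T\subset W\}\leq\#\{T\subset P\}\leq C t_1t_2\delta^{-2}\lesssim C|W|\delta^{-2}.
\]

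Two routine points remain. First, the Wang--Zahl theorem is stated for tubes that are distinct; this is implicit in the linear Wolff hypothesis, since applying it with $t_1=t_2=\delta$ shows there are only $O(1)$ essentially identical copies of any tube, and these can be discarded. Second, one should check that the normalisation of tube volume matches: here $|T|\sim\delta^2$ in $\RR^3$, so $K|W|/|T|$ agrees with the linear Wolff count up to constants.

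The main obstacle is only bookkeeping: matching precisely the form of the hypothesis in \cite{WZ3} (convex sets versus prisms, and how the $K$-dependence enters). No new argument is needed beyond the John ellipsoid comparison above.
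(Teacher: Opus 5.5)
Your proposal is correct and matches the paper, which gives no argument of its own and simply quotes this as a special case of the Wang--Zahl theorem \cite{WZ3}. Your John-ellipsoid check is the verification the paper leaves unstated. In $\RR^3$, any convex set containing a $\delta$-tube is comparable to a $\sim 1\times t_1\times t_2$ prism, after covering long boxes by unit-length pieces. Hence the linear Wolff axioms give the Katz--Tao convex Wolff axioms with $K=O(C)$, which is exactly why this is a special case.
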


\subsection{The result}

Here we state the proposition that we get by applying Wang-Zahl inside a polynomial grain.

\begin{prop}\label{wangzahlgrain}
    Let $P\in \RR[x_1,x_2,x_3,x_4]$ be a non-zero polynomial of degree at most $d$, and let $Z=Z(P)\cap [-1,1]^4$. Let $\tubes$ be a collection of $\delta$-tubes with $T\subset N_\delta(Z)$ for all $T\in \tubes$. Further suppose that the tubes of $\tubes$ point in $\delta$-separated directions. Then we have
    \begin{equation}
        \Bigg| \bigcup_{T\in \tubes} T \Bigg|\geq C_{\epsilon,d} \delta^\epsilon (\delta^3|\tubes|).
    \end{equation}
\end{prop}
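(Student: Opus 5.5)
We aim to reduce Proposition \ref{wangzahlgrain} to Theorem \ref{wangzahlthm} by locally flattening the variety $Z$ into a $3$-dimensional graph and straightening the tubes inside it, using Theorem \ref{katzrog} to supply the linear Wolff axioms in the flattened picture.

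\textbf{Step 1: reduce to a regular graph.} Cover $[-1,1]^4$ by $O_d(1)$ pieces. On most of each piece, $Z$ is a graph $x_4=f(x_1,x_2,x_3)$ over one of the coordinate hyperplanes, after a rotation and with $|\nabla f|\lesssim 1$. The exceptional parts are those where $\nabla P$ is small relative to $P$, or where the graph is steep. These are handled by induction on degree: replace $P$ by a suitable partial derivative, or use the semialgebraic decomposition into cells of bounded complexity. Each tube $T$ has length $1$ and a segment of it lies in $N_\delta(Z)$. By pigeonholing over the $O_d(1)$ cells and losing a constant factor, we may assume that each tube has a segment of length $\gtrsim_d 1$ inside $N_\delta$ of a single good cell $Z_j$. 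On that cell $Z_j$ is a smooth graph with bounded second derivatives at scale $\ge\delta^{\epsilon}$, after a further dyadic pigeonholing in curvature and a rescaling that costs only $\delta^{-O(\epsilon)}$.

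\textbf{Step 2: project.} Project the tubes to $\RR^3$ via $\pi(x_1,\dots,x_4)=(x_1,x_2,x_3)$. Because the graph has bounded slope, $\pi$ is bi-Lipschitz on $N_\delta(Z_j)$. The region $N_\delta(Z_j)$ has measure about $\delta\cdot|\pi(N_\delta(Z_j))|$, and $T\cap N_\delta(Z_j)$ is mapped to a $\sim\delta$-neighborhood of a segment in $\RR^3$. Note that the core line of $T$ lies within $\delta$ of $Z$ along a unit segment, so $\pi(T)$ is essentially a $\delta$-tube in $\RR^3$. We then obtain
\[
\Big|\bigcup T\Big|\gtrsim \delta\,\Big|\bigcup \pi(T)\Big|,
\]
where the factor $\delta$ comes from the fibres of $N_\delta(Z_j)$ over $\RR^3$. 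It remains to show that
\[
\Big|\bigcup\pi(T)\Big|\gtrsim\delta^{\epsilon}\delta^2|\tubes|.
\]

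\textbf{Step 3: Wolff axioms and multiplicity.} We must check that the projected tubes satisfy the linear Wolff axioms of Theorem \ref{wangzahlthm}, up to a $\delta^{-\epsilon}$ loss, which can be absorbed by random sampling or by the version of Wang--Zahl that allows a $\delta^{-\epsilon}$ constant. Take a prism $R\subset\RR^3$ of dimensions $1\times t_1\times t_2$. Its preimage $S=\pi^{-1}(R)\cap N_\delta(Z_j)$ is semialgebraic of complexity $O_d(1)$ and has measure $\lesssim \delta t_1t_2$. Tubes with $\pi(T)\subset R$ satisfy $|T\cap S|\gtrsim|T|$, so Theorem \ref{katzrog} with $\lambda\sim1$ gives at most
\[
C\,\delta t_1t_2\,\delta^{-3-\epsilon}=C\,t_1t_2\,\delta^{-2-\epsilon}
\]
such tubes, which is the linear Wolff axiom. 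We also need the projected tubes to be essentially distinct, with bounded multiplicity of duplicates. A family of $\delta$-separated directions whose projections coincide lies in a $\delta\times\delta$ tube-like set, so Theorem \ref{katzrog} applied with $S=N_{2\delta}(\text{tube})$ gives $O(\delta^{-\epsilon})$ duplicates. We pass to a distinct subfamily, losing only $\delta^{\epsilon}$. Applying Theorem \ref{wangzahlthm} then finishes the proof.

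\textbf{Main obstacle.} The hardest step is Step 1: making rigorous the passage from an arbitrary degree-$d$ variety, including its singular and steep parts, to graph pieces on which $\pi$ is bi-Lipschitz and the tubes straighten uniformly, all with losses of only $\delta^{-\epsilon}$. The first approach to try is a cell decomposition: partition $[-1,1]^4\cap N_\delta(Z)$ into $O_d(1)$ semialgebraic cells, each of which is a $\delta$-thickened Lipschitz graph over some coordinate $3$-plane. This works because the slope can be controlled by choosing the best of the four coordinate projections. The argument also relies on the tubes being straight, so that the relevant notion is the convexity of the portion of each tube along the graph rather than its curvature.
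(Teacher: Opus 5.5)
Your proposal is correct and follows essentially the same route as the paper. The paper's Lemma~\ref{grdecomp} is the cell decomposition you describe in your ``Main obstacle'' paragraph: stratify $Z$, then pick the coordinate $e_j$ with $|\operatorname{proj}_{N_pM}(e_j)|\geq 1/2$ to get Lipschitz graphs whose $\delta$-neighborhoods have vertical fibres of length $\lesssim\delta$. The paper then pigeonholes a piece for each tube, projects to $\RR^3$, checks the linear Wolff axioms via Theorem~\ref{katzrog} using $|\pi^{-1}(R)\cap N_\delta(S_{n_0})|\lesssim t_1t_2\delta$, applies Wang--Zahl, and recovers the factor $\delta$ from the fibres, just as you do.
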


\subsection{Reduction to a graphical decomposition lemma}

Working with the variety $Z(P)$ directly is complicated, so we break it up into more regular pieces and then work with those. The following definition will help us build these regular pieces.

\begin{defn}
    A semi-algebraic subset $M$ of $\RR^k$ is a smooth semi-algebraic manifold of dimension $l$ if for every $x\in M$, there exists a semi-algebraic open set $U$ of $\RR^k$ and a smooth semi-algebraic diffeomorphism $\varphi$ from $U$ to a semi-algebraic open neighborhood $\Omega$ of $x\in \RR^k$ such that $\varphi(0)=x$ and 
    \begin{equation*}
        \varphi(U\cap (\RR^l\times \{0\}))=M\cap \Omega
    \end{equation*}
    (where $\RR^l\times \{0\}=\{(a_1,a_2,\ldots, a_l)\in \RR^l\}).$  
\end{defn}

See \cite{BPR, BCR} for more background. With this definition, we are ready to state our decomposition lemma. We will use the notation $e_1=(1,0,0,0)$, $e_2=(0,1,0,0)$, $e_3=(0,0,1,0)$ and $e_4=(0,0,0,1)$. The statement can be seen as a variant of Proposition 3.1 from \cite{Z2}, tailored to our purpose.
    
\begin{lem}\label{grdecomp} (Graphical decomposition)
    Let $P\in \RR[x_1,x_2,x_3,x_4]$ be a non-zero polynomial of degree at most $d$, and let $Z=Z(P)\cap [-1,1]^4$. Then there exists $N, C\lesssim_d 1$, and $v_n \in \{e_1,e_2,e_3,e_4\}$ for $1\leq n \leq N$, and connected smooth semi-algebraic manifolds $S_n\subset Z$; such that
    \begin{enumerate}[label=(\alph*)]
        \item we have $Z=\bigcup_{n=1}^N S_n$.
        \item each $S_n$ has complexity at most $C$.
        \item for all $p\in S_n$, we have $\operatorname{proj}_{N_p S_n}(v_n)\geq 1/2$.
        \item if $\pi_n:\RR^4\to v_n^\perp$ denotes the orthogonal projection, then $S_n$ is a graph in the $v_n$ direction of a function with Lipschitz constant at most $C$.
        \item the set $S_{n,\delta}=N_\delta(S_n)$ is semi-algebraic of complexity at most $C$ (uniformly in $\delta$), and
        \begin{equation}
            \mathcal{H}^1 (S_{n,\delta}\cap \pi_n^{-1}(y))\leq C\delta
        \end{equation}
        for all $y\in v_n^\perp$.
    \end{enumerate}
\end{lem}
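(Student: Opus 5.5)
We will build the decomposition from a semi-algebraic stratification of $Z$, then refine each stratum according to which coordinate direction is most normal to it. First apply a cylindrical (or Whitney) stratification result from \cite{BPR, BCR}. This writes $Z=Z(P)\cap[-1,1]^4$ as a disjoint union of $O_d(1)$ connected smooth semi-algebraic manifolds of dimension at most $3$, each of complexity $O_d(1)$. For each stratum $M$ and each $p\in M$, the normal space $N_pM$ has dimension at least $1$. Since $e_1,\dots,e_4$ are orthonormal, we have $\sum_i |\operatorname{proj}_{N_pM}(e_i)|^2=\dim N_pM\geq 1$. Hence some $i$ satisfies $|\operatorname{proj}_{N_pM}(e_i)|\geq 1/2$, and in fact we may use a slightly larger threshold such as $1/\sqrt{4}$ with room to spare.

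Next define, for each $i$, the set $A_i=\{p\in M: |\operatorname{proj}_{N_pM}(e_i)|\geq 1/2\}$. The map $p\mapsto N_pM$ is semi-algebraic: it can be written via the Grassmannian, or via projection matrices expressed in the derivatives of the defining equations. So $A_i$ is semi-algebraic of complexity $O_d(1)$, and $M=\bigcup_i A_i$. To obtain smooth pieces, we restratify each $A_i$ into connected smooth semi-algebraic manifolds of complexity $O_d(1)$. A piece $S\subset A_i$ may have lower dimension than $M$, but its normal space contains $N_pM$, so property (c) is preserved; here we use $v_n=e_i$. This gives (a)--(c) with $N\lesssim_d 1$.

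For (d), property (c) says the tangent space $T_pS_n$ makes angle bounded away from $0$ with $v_n$. Therefore $\pi_n|_{S_n}$ is an immersion with uniformly bounded inverse differential along $\pi_n(S_n)$. The hardest point is injectivity: a connected piece could wrap around and meet a fibre $\pi_n^{-1}(y)$ several times. We handle this with a further cylindrical decomposition in the $v_n$ direction. Decompose $\RR^4$ adapted to $S_n$ with $v_n$ as the last variable. Each cell of $S_n$ is then a graph of a semi-algebraic function over a cell in $v_n^\perp$, and the number of cells is $O_d(1)$. On such a graph cell, the Lipschitz bound follows from (c): the slope satisfies $|\nabla f|\leq \sqrt{3}$ pointwise. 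The pointwise gradient bound gives a global Lipschitz bound only on convex or suitably nice domains. For this we invoke the fact that semi-algebraic cells of bounded complexity can be further split into $O(1)$ pieces that are Lipschitz-connected, for instance via Kurdyka/Pawłucki ``L-regular'' decompositions. The constant $C$ then depends only on $d$.

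For (e), semi-algebraicity of $N_\delta(S_n)$ with complexity uniform in $\delta$ follows from Tarski--Seidenberg. We write $N_\delta(S_n)=\{x:\exists p\in S_n,\ |x-p|^2<\delta^2\}$, which is a first-order formula whose complexity does not depend on $\delta$. For the fibre bound, suppose $x=y+tv_n$ lies within $\delta$ of $p=(\pi_n(p),f(\pi_n(p)))\in S_n$. Then $|y-\pi_n(p)|<\delta$ and $|t-f(\pi_n(p))|<\delta$. Lipschitz continuity gives $|f(\pi_n(p))-f(y')|\leq C\delta$ for any other $y'$ within $\delta$ of $y$ in the domain. So all such $t$ lie in an interval of length $O(C\delta)$, which gives $\mathcal{H}^1(S_{n,\delta}\cap\pi_n^{-1}(y))\lesssim C\delta$. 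The main obstacle is the global Lipschitz and graph property in (d), since (c) only controls the function locally. Everything else is standard semi-algebraic bookkeeping.
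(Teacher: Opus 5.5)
Your proposal is correct and follows essentially the same route as the paper: stratify $Z$, use $\sum_j|\operatorname{proj}_{N_pM}(e_j)|^2=\dim N_pM\geq 1$ to split each stratum by its most normal coordinate direction, and get the pointwise slope bound from $|\operatorname{proj}_{T_pM}(e_j)|\leq\sqrt3/2$. Like the paper, you then invoke an external semi-algebraic result to upgrade the pointwise bound to a global Lipschitz bound (you cite L-regular/quasi-convex decompositions, the paper cites Proposition 3.6 of \cite{Z2}) and deduce (e) from (d). You are in fact more explicit than the paper about why sub-pieces keep (c), why the pieces are genuine graphs (a cylindrical decomposition in the $v_n$ direction, where sector cells are ruled out by (c) because they contain vertical segments), and how (e) follows. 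The only slip is cosmetic: $1/\sqrt4=1/2$, so there is no room to spare in the threshold (a normal line spanned by $(1,1,1,1)$ gives equality), but since you use $1/2$ this does not matter.
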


This lemma gives us the technical support needed to proceed.

\begin{proof}[Proof of Proposition \ref{wangzahlgrain} assuming Lemma \ref{grdecomp}]

By Lemma \ref{grdecomp}, we write $Z=\bigcup_{n=1}^N S_n$ with all the properties stated. We also have
\begin{equation}
    N_\delta (Z)\subset \bigcup_{n=1}^N N_\delta (S_n).
\end{equation}
Therefore, each $T\in \tubes$ satisfies
\begin{equation}
    T\subset \bigcup_{n=1}^N N_\delta (S_n) \implies \sum_{n=1}^N |T\cap N_\delta (S_n)|\geq |T|.
\end{equation}
So there exists $n(T)$ such that
\begin{equation}
    |T\cap N_\delta (S_{n(T)})|\geq N^{-1}|T|.
\end{equation}
Pigeonholing in $n(T)$, there exists $\tubes_0\subset \tubes$ and $n_0$ such that
\begin{equation}
    |\tubes_0|\geq N^{-1}|\tubes| \quad \text{and} \quad |T\cap N_\delta (S_{n_0})|\geq N^{-1}|T|.
\end{equation}
We restrict our attention to the tubes in $\tubes_0$. This is harmless since $|\tubes_0|\gtrsim_d |\tubes|$ as we just saw. Furthermore, we relabel $T\in \tubes_0$ to mean $T\cap N_\delta (S_{n_0})$. This is a slight abuse of notation, since $T\cap N_\delta (S_{n_0})$ has $O_d(1)$ many connected components (see \cite{BPR, BCR}) and we mean to pick one. Again, this is acceptable since $|T\cap N_\delta (S_{n_0})|\gtrsim_d |T|$. We now have a collection of tubes $\tubes_0$ contained in the $\delta$-neighborhood of a smooth semi-algebraic manifold $S_{n_0}$ of complexity at most $C\lesssim_d 1$. Without loss of generality, we may assume that $v_{n_0}=e_4$. Therefore (c) tells us the normal in roughly vertical, (d) tells us that the manifold is a graph of a Lipschitz function with constant $C\lesssim_d 1$ and (e) tells us the intersection of $N_\delta (S_{n_0})$ with any vertical line is $\lesssim_d \delta$.

Now let $\pi_4(x_1,x_2,x_3,x_4)=(x_1,x_2,x_3)$. Then let $\tubes_0'$ be the projected set of tubes in $[-1,1]^3$ (this is a legal move because of (c)). Now, we claim that that this collection satisfies the linear Wolff axioms. Take a $1\times t_1\times t_2$ rectangular prism $R$. The number of tubes of $\tubes_0'$ inside $R$ is given by the number of tubes of $\tubes_0$ inside $\pi_4^{-1}(R)\cap N_\delta (S_{n_0})$. This is a semi-algebraic set of complexity $\lesssim_d 1$. Every vertical line in the cube $[-1,1]^4$ intersects $N_\delta(S_{n_0})$ in an interval of length $\lesssim_d\delta$ by (e). Therefore the volume of this semi-algebraic set is $\lesssim t_1t_2\delta$. Therefore, by applying Theorem \ref{katzrog}, we get the number of tubes contained is $\lesssim (t_1t_2\delta) \delta^{-3}=t_1t_2 \delta^{-2}$. This establishes the linear Wolff axioms claim. 

Finally, by applying the Wang-Zahl result (Theorem \ref{wangzahlthm}), we get
    \begin{equation}
        \bigg|\bigcup_{T' \in \tubes_0'} T' \bigg|\gtrapprox \delta^2 |\tubes_0'|.
    \end{equation}
    Again, retracing the fiber, we get a factor of $\delta$ which completes the proof.
\end{proof}

\subsection{Proof of graphical decomposition lemma}

We will now prove the graphical decomposition lemma.

\begin{proof}[Proof of Lemma \ref{grdecomp}]
We can write 
\begin{equation}\label{stratif}
    Z=\bigsqcup_{\alpha=1}^A Z_\alpha,
\end{equation}
where each $Z_\alpha$ is a connected smooth semi-algebraic manifold of dimension $d_\alpha\leq 3$ and $A\lesssim_d 1$. This follows from Theorem 5.38 of \cite{BPR}, or from Proposition 2.9.10 of \cite{BCR}. It can be seen as a form of cylindrical decomposition of semi-algebraic sets and should be compared with Theorem 3.2 of \cite{Z2}.

Fix a manifold $M=Z_\alpha$. For $p\in M$, let $T_p M$ be the tangent space to $M$ at $p$. Correspondingly, we have the normal space
\begin{equation}
    N_p M=(T_p M)^\perp.
\end{equation}
Now, if $\operatorname{proj}_{N_p M}$ denotes the orthogonal projection, then
\begin{equation}
    \sum_{j=1}^4 |\operatorname{proj}_{N_p M}(e_j)|^2=\operatorname{trace} (\operatorname{proj}_{N_p M})=4-d_\alpha\geq 1.
\end{equation}
So there exists $j$ such that $|\operatorname{proj}_{N_p M}(e_j)|\geq 1/2$. We define
\begin{equation}\label{normalvertical}
    M_j=\{p\in M: |\operatorname{proj}_{N_p M}(e_j)|\geq 1/2\}.
\end{equation}
Then we get that $M=M_1\cup M_2\cup M_3\cup M_4$. These are semi-algebraic sets of bounded complexity. Therefore we obtain a refinement of \eqref{stratif} with the the added condition that for each $\alpha$, there exists $j$ such that $|\operatorname{proj}_{N_p Z_\alpha}(e_j)|\geq 1/2$ for all $p\in Z_\alpha$. We may also assume that for this $j$, the semi-algebraic manifold $Z_\alpha$ is the graph of a function in the $e_j$ direction. At this point (a), (b) and (c) have been established, we now work towards establishing (d) and (e).

Again, fix an $\alpha$ (and corresponding $j$), and let $M=Z_\alpha$. We have the projection map $\pi_j:\RR^4\to e_j^\perp$. Let us now restrict this map to $T_p M$. For $v\in T_p M$, we have
\begin{equation}\label{projdotprod}
    |\pi_j(v)|^2=|v|^2-|(v,e_j)|^2.
\end{equation}
Also, by \eqref{normalvertical}, we get that
\begin{equation}
    |\operatorname{proj}_{T_p M} (e_j)|^2=1-|\operatorname{proj}_{N_p M} (e_j)|^2\leq \frac{3}{4}.
\end{equation}
In particular, we have $|\operatorname{proj}_{T_p M} (e_j)|\leq \sqrt{3}/2$. This implies
\begin{equation}
    |(v,e_j)|\leq \frac{\sqrt{3}}{2} |v|.
\end{equation}
Plugging this into \eqref{projdotprod}, we get that
\begin{equation}
    |\pi_j(v)|\geq \frac{1}{2}|v|.
\end{equation}
Therefore, if $f$ is the function corresponding to $M$, we have $|Df|\leq 3$. Now by applying Proposition 3.6 from \cite{Z2}, we get the Lipschitz property (d), and (e) follows from (d). 
\end{proof}

\section{New variant of Guth-Zahl's trilinear Kakeya estimate}

By applying Proposition \ref{wangzahlgrain} as a new ingredient, we obtain a slight improvement to the trilinear estimate (Proposition 3.1) of Guth and Zahl \cite{GZ}, namely that that the exponent of the wedge product changes from $12/13$ to $9/13$.

\begin{thm}[Trilinear Kakeya in $\RR^4$]\label{GZTrilinearProp}
For each $\epsilon>0$, there exists a constant $C_\epsilon$ so that the following holds. Let $\tubes$ be a set of $\delta$-tubes in $\RR^4$ that point in $\delta$-separated directions. Then
\begin{equation}\label{trilinearBoundFromGZ}
\int\Big(\sum_{T_1,T_2,T_3\in\tubes}\chi_{T_1}\ \chi_{T_2}\ \chi_{T_3}\ |v_1\wedge v_2\wedge v_3|^{9/13}\Big)^{13/27} \leq C_{\epsilon}\delta^{-1/3-\epsilon}(\delta^3|\tubes|)^{4/3},
\end{equation}
where in the above expression $v_i$ is the direction of the tube $T_i$.
\end{thm}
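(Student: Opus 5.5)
We will follow the proof of Proposition 3.1 of Guth--Zahl \cite{GZ} essentially line by line: polynomial partitioning at a suitable degree, then induction on scales together with the grains decomposition. The only change is in the grain (cellular-to-algebraic) case. There the estimate $|\bigcup T|\gtrsim \delta^{3}|\tubes|$ type bounds derived from the polynomial Wolff axioms are replaced by the stronger Proposition \ref{wangzahlgrain}, and this replacement alters the weight exponent in the wedge product.

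\textbf{Step 1 (reduction).} By dyadic pigeonholing on $|v_1\wedge v_2\wedge v_3|\sim \theta$ and on the multiplicity, we reduce to bounding, for each dyadic $\theta$, the measure of the set where there are $\gtrsim \mu$ triples of tubes with wedge $\sim\theta$. Raising to the $13/27$ power and summing over $O(|\log\delta|)$ scales costs only $\delta^{-\epsilon}$. The goal is then a bound of the form
\[
\big|\{x: \#\text{triples}\gtrsim\mu,\ |v_1\wedge v_2\wedge v_3|\sim\theta\}\big|\lesssim \delta^{-\epsilon}\,\theta^{-9/27}\,\mu^{-27/13}\,(\ldots).
\]
This says that the $\theta$-dependence now enters with exponent $9/13$ rather than $12/13$.

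\textbf{Step 2 (partition and cells).} We apply polynomial partitioning of degree $D\sim\delta^{-\epsilon^2}$ to the trilinear quantity. In the cellular case, each tube meets $\lesssim D$ cells, and we induct on the number of tubes exactly as in \cite{GZ}; the exponents $13/27$ and $4/3$ are chosen so that this case closes. In the algebraic case, where most of the mass lies near $N_{\delta}(Z(P))$, we further split by angle with the tangent space. The transverse part is handled by induction on scales as in \cite{GZ}. The tangent part consists of tubes essentially contained in $N_\delta(Z)$ after passing to grains at scale $\rho$.

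\textbf{Step 3 (the tangent/grain case).} This is the main new ingredient and the main obstacle. Inside a grain, three tangent tubes through a point have directions lying near the $3$-dimensional tangent space. Guth--Zahl bound the contribution by combining the wedge weight with a bilinear/planar estimate inside the $3$-dimensional tangent structure. We instead apply Proposition \ref{wangzahlgrain} (rescaled to grain scale): the tubes tangent to the grain occupy volume $\gtrapprox\delta^3|\tubes_{\text{grain}}|$, so the multiplicity inside a grain is $\lessapprox 1$ on average, up to the direction-counting factor.

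Combining this multiplicity bound with Hölder against the wedge weight loses less of $\theta$. We will need to verify this carefully: rerun GZ's numerology for the term $\theta^{\beta}$ and check that $\beta=9/13$ is exactly the value at which the grain-case bound matches the cellular-case bound $\delta^{-1/3}(\delta^3|\tubes|)^{4/3}$. The first thing to try is to write the grain contribution as $\int (\sum \chi\chi\chi\,\theta^\beta)^{13/27}$. We then bound the inner sum by $\theta^{\beta}\mu_{\rm grain}^3$, using the fact that wedges of tangent triples are $\lesssim$ the grain thickness ratio, and optimize $\beta$. The delicate point is transferring direction separation to the rescaled grain. Theorem \ref{katzrog} supplies exactly this, since Proposition \ref{wangzahlgrain} requires only direction-separated tubes in $N_\delta(Z)$, so a reverse Wolff-type count from Theorem \ref{katzrog} keeps the tube counts per grain controlled.

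Finally, we sum over grains and scales, absorbing the $\delta^{-\epsilon}$ losses, which closes the induction.
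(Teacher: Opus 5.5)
Your proposal has a genuine gap. It never produces an inequality from which the exponent $9/13$ follows, and the one mechanism you name for the gain in $\theta$ is false. Tangency to a grain (the $\delta$-neighbourhood of a hypersurface in $\RR^4$) gives no upper bound on $|v_1\wedge v_2\wedge v_3|$. Three unit vectors in a $3$-dimensional tangent space can be orthonormal, and only a $4$-fold wedge would be small. So bounding the inner sum by $\theta^\beta\mu_{\rm grain}^3$ and ``optimizing $\beta$'' has nothing to optimize.

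Your Step 2 also misdescribes \cite{GZ}, which the paper follows verbatim up to one step. That argument has no cellular case closed by induction on $|\tubes|$ and no transverse part handled by induction on scales. It uses a grains decomposition directly. Nor is a planar estimate what gets replaced. What is replaced is the polynomial-Wolff bound $N_1\lesssim|G'|\delta^{-3}\ell_1^{-4}\lesssim\delta^{-2}\ell_1^{-1}$. (Also, in Step 1 the power should be $\mu^{-13/27}$, not $\mu^{-27/13}$.)

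Here is what the paper does. After pigeonholing $\theta$ and the cubes, it takes a grains decomposition (Proposition \ref{existsGrainsDecomp}). It pigeonholes the lengths $\ell_i$ of the components of $T\cap G$, with $\ell_1$ maximal, and cuts grains into sub-grains of diameter $\sim\ell_1$. It then pigeonholes the counts $N_i=|\tubes_{i,G'}|$ and the multiplicities $\mu,\mu_i$, with $\mu\le\mu_{i_0}=\max_i\mu_i$.

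The wedge weight is handled only by multilinear Kakeya (Theorem \ref{BourgainGuthTHhm}) inside each sub-grain, which gives $|A'||\mathcal G'|^{-1}\mu^{3/2}\theta^{1/2}\lesssim\delta^4(N_1N_2N_3)^{1/2}$. Double counting against the grains property gives $N_i\lessapprox|\tubes||\mathcal G'|^{-3/4}$ and $\ell_1\gtrapprox\mu_{i_0}|A'||\mathcal G'|^{-1/4}(\delta^3|\tubes|)^{-1}$.

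The only new input is Proposition \ref{wangzahlgrain}, applied in each rescaled sub-grain. There are up to $\ell_1^{-3}$ tubes per $\delta/\ell_1$-cap of directions; this is your ``direction-counting factor'', and it turns $\ell_1$ into $\ell_1^4$. The result is $|A'|/|\mathcal G'|\gtrapprox\delta^3N_1\ell_1^4$, hence $N_1\lessapprox\delta^{-3}\mu_{i_0}^{-4}|A'|^{-3}(\delta^3|\tubes|)^4$.

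The paper then writes $N_1^{1/2}=N_1^{1/3}N_1^{1/6}$. It uses the grains bound on the first factor and the Wang--Zahl bound on the second, together with $\mu\le\mu_{i_0}$. This yields $|A|^9\mu^{13}\theta^3\lessapprox\delta^{-3}(\delta^3|\tubes|)^{12}$, which is exactly the $9/13$ estimate.

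So the improvement in $\theta$ comes from the sharper bound on $N_1$, linked back to $\mu$ through $\ell_1$, not from any smallness of the wedge. Your slogan that ``multiplicity in a grain is $\lessapprox 1$'' does not by itself reach $\mu$ or $\theta$ without that link.
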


Correspondingly, we get an improved corollary. Note that the exponent of theta changes from 1 to 3/4 when you compare with Corollary 20 in \cite{KZ}. The proof of the corollary follows the same route.

\begin{cor}\label{trilinearVolumeBound}
For each $\epsilon>0$, there exists a constant $C_\epsilon$ so that the following holds. Let $(\tubes,Y)$ be a set of $\delta$-tubes in $\RR^4$ that point in $\delta$-separated directions and their associated shading. Suppose that for each $Q\in\mathcal{Q}_Y(T)$, we have 
\begin{equation*}
|\{(T_1,T_2,T_3)\in\tubes_Y(Q)\colon |v_1\wedge v_2\wedge v_3|\geq\theta\}|\geq s|\tubes_Y(Q)|^3.
\end{equation*}
Then
\begin{equation}\label{volumeBoundTrilinear}
\Big|\bigcup_{T\in\tubes}Y(T)\Big|\geq c_{\eps}\delta^{\eps} s^{9/4} \lambda_Y^{13/4}\delta^{3/4}\theta^{3/4}(\delta^3|\tubes|)^{1/4}.
\end{equation}
\end{cor}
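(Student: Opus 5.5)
The plan is to deduce the corollary directly from Theorem \ref{GZTrilinearProp} by bounding the left side of \eqref{trilinearBoundFromGZ} from below on $E=\bigcup_{T\in\tubes}Y(T)$, and then using H\"older's inequality to convert a bound on $\int_E |\tubes_Y(Q)|^{13/9}$ into a lower bound on $|E|$. This follows the same route as Corollary 20 in \cite{KZ}; only the exponents change.

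\emph{Step 1 (pointwise lower bound).} Since each cube of a shading meets its tube, replacing each $T$ by its $2$-fold dilate ensures $\chi_{Y(T)}\le\chi_{2T}$. The dilated tubes still point in $\delta$-separated directions, so Theorem \ref{GZTrilinearProp} applies to them with the same right-hand side up to constants. For $x\in Q\in\mathcal{Q}(Y)$, write $m(Q)=|\tubes_Y(Q)|$. Every triple in $\tubes_Y(Q)^3$ contributes to the sum at $x$, and by hypothesis at least $s\,m(Q)^3$ of these triples have $|v_1\wedge v_2\wedge v_3|\ge\theta$. Hence the integrand is at least $s\,\theta^{9/13}m(Q)^3$ on $Q$. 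Raising to the power $13/27$ and integrating gives
\[
s^{13/27}\theta^{1/3}\int_E m^{13/9}\;\le\; C_\eps\delta^{-1/3-\eps}(\delta^3|\tubes|)^{4/3}.
\]

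\emph{Step 2 (H\"older).} We have $\int_E m=\sum_T|Y(T)|\sim\lambda_Y\delta^3|\tubes|$. H\"older with exponents $13/9$ and $13/4$ gives
\[
\int_E m\le\Big(\int_E m^{13/9}\Big)^{9/13}|E|^{4/13},
\]
and therefore $\int_E m^{13/9}\gtrsim(\lambda_Y\delta^3|\tubes|)^{13/9}|E|^{-4/9}$. Substituting into Step 1 and simplifying the powers of $\delta^3|\tubes|$ (namely $13/9-4/3=1/9$) yields
\[
|E|^{4/9}\gtrsim \delta^{\eps}\,s^{13/27}\theta^{1/3}\lambda_Y^{13/9}\delta^{1/3}(\delta^3|\tubes|)^{1/9}.
\]
Raising to the power $9/4$ gives
\[
|E|\gtrsim\delta^{\eps'}s^{13/12}\theta^{3/4}\lambda_Y^{13/4}\delta^{3/4}(\delta^3|\tubes|)^{1/4}.
\]

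\emph{Step 3 (cleanup).} Since $s\le 1$ and $13/12\le 9/4$, we have $s^{13/12}\ge s^{9/4}$, which gives \eqref{volumeBoundTrilinear}. Here $\eps'=9\eps/4$, and we rename $\eps$ accordingly.

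The only mildly delicate point is Step 1: justifying the passage from the shadings $Y(T)$ to the indicator functions $\chi_{T}$ (via dilates, or by noting the cubes lie in $2T$), and interpreting the hypothesis as holding for every $Q\in\mathcal{Q}(Y)$. The rest is exponent bookkeeping. Compared with \cite{KZ}, the improvement from $\theta^{1}$ to $\theta^{3/4}$ comes purely from $\theta^{(9/13)(13/27)\cdot(9/4)}=\theta^{3/4}$.
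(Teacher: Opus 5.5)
Your argument is correct. It differs from the paper's only in how the lower bound on the trilinear integral is converted into a volume bound.

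The paper first refines $(\tubes,Y)$ so that $|\tubes_Y(Q)|\sim\mu_Y$ for every $Q\in\mathcal{Q}(Y)$. This refinement discards whole cubes, so the triple hypothesis is preserved. Then the left side of \eqref{trilinearBoundFromGZ} is at least $|E|\,\mu_Y^{13/9}\theta^{1/3}$ times a power of $s$, and the paper substitutes $\mu_Y|E|\approx\lambda_Y\delta^3|\tubes|$. You skip the pigeonholing and let H\"older's inequality $\int_E m\le(\int_E m^{13/9})^{9/13}|E|^{4/13}$ do the same job for non-uniform multiplicity. When the multiplicity is uniform, H\"older is sharp up to constants, so the two computations agree.

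Your route has two advantages. It avoids the $|\log\delta|$ losses and the refinement bookkeeping. Because you track $s$ exactly, it also gives $s^{13/12}$, which is stronger than the stated $s^{9/4}$. The paper writes $s^{-1}$ where $s^{-13/27}$ actually arises, and that is the source of its weaker power.

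The paper's normalization has its own advantage: it produces the multiplicity form \eqref{trilinearPointwise} directly, which is how the corollary is used in the proof of Lemma \ref{boundWithCorrectTubesDependence}. You recover that form from your volume bound after the same uniform-multiplicity refinement.

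There is one small correction in Step 1. A $\delta$-cube meeting $T$ has diameter $2\delta$, so it lies within $3\delta$ of the core segment of $T$. The $2$-fold dilate therefore need not contain it; use the $3$-fold dilate instead.

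To apply Theorem \ref{GZTrilinearProp} to these fattened tubes, whose directions are only $\delta$-separated, substitute $x=3y$ in the integral. The image of the $3$-fold dilate of $T$ under $x\mapsto x/3$ is a translate of $T$. So the theorem applies directly to the rescaled family, at the cost of a factor $3^4$. Alternatively, adopt the convention from the proof of Theorem \ref{GZTrilinearProp} that each tube is replaced by the union of the $\delta$-cubes meeting it.
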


Equivalently, we get a pointwise bound
\begin{equation}\label{trilinearPointwise}
|\tubes_Y(Q)|\lessapprox \lambda^{-9/4}s^{-9/4}\theta^{-3/4}\delta^{-3/4}(\delta^3|\tubes|)^{3/4}\quad\textrm{for all}\ Q\in\mathcal{Q}(Y).
\end{equation}

\begin{proof}
After refining $(\tubes,Y)$, we can suppose that $|\tubes_Y(Q)|\sim\mu_Y$ for each $Q\in\mathcal{Q}_Y(T)$. Then
\begin{equation}
\begin{split}
&\delta^4|\mathcal{Q}(Y)|\big(\mu_Y^3\theta^{9/13}\big)^{13/27}\\
&\sim \sum_{Q\in\mathcal{Q}_Y(T)}|Q|\ \big(|\tubes_Y(Q)|^3\ \theta^{9/13}\big)^{13/27}\\
&\sim \sum_{Q\in\mathcal{Q}_Y(T)}|Q|\ s^{-1}\big(|\{T_1,T_2,T_3\in\tubes_Y(Q)\colon |v_1\wedge v_2\wedge v_3|\sim\theta\}|\ \theta^{9/13}\big)^{13/27}\\
&\leq s^{-1}\sum_{Q\in\mathcal{Q}_Y(T)}|Q| \big(\sum_{T_1,T_2,T_3\in\tubes_Y(Q)} |v_1\wedge v_2\wedge v_3|^{9/13}\big)^{13/27}\\
&\leq s^{-1}\int\Big(\sum_{T_1,T_2,T_3\in\tubes}\chi_{T_1}\ \chi_{T_2}\ \chi_{T_3}\ |v_1\wedge v_2\wedge v_3|^{9/13}\Big)^{13/27}\\
&\leq C_{\epsilon}s^{-1}\delta^{-1/3-\epsilon}(\delta^3|\tubes|)^{4/3}.
\end{split}
\end{equation}
Rearranging, we get
\begin{equation*}
\mu_Y^{13}\leq C_{\eps}^9 s^{-9} \delta^{-3-9\eps}\theta^{-3}(\delta^4|\mathcal{Q}(Y)|)^{-9}(\delta^3|\tubes|)^{12}.
\end{equation*}
Since $\mu_Y|\bigcup_{T\in\tubes}Y(T)|\approx \lambda_Y (\delta^3|\tubes|)$ and $\delta^4|\mathcal{Q}(Y)|\sim |\bigcup_{T\in\tubes}Y(T)|$, we have
\begin{equation*}
\lambda_Y^{-13}(\delta^3|\tubes|)^{13}|\bigcup_{T\in\tubes}Y(T)|^{-13}\leq C_{\eps}^9s^{-9}\delta^{-3-9\eps}\theta^{-3}|\bigcup_{T\in\tubes}Y(T)|^{-9}(\delta^3|\tubes|)^{12}.
\end{equation*}
Re-arranging, we obtain \eqref{volumeBoundTrilinear}.
\end{proof}

\subsection{Strategy for the proof of Theorem \ref{GZTrilinearProp}}

The structure of the proof will be very similar to the proof of Proposition 3.1 in Guth-Zahl \cite{GZ}. We will be borrowing heavily from there, and applying the new Proposition \ref{wangzahlgrain} in the middle to get the improvement. We now state some tools which we will be using.
\subsubsection{Multilinear Kakeya}
\begin{thm}[Bourgain-Guth, \cite{BG}, Theorem 6]\label{BourgainGuthTHhm}
Let $\tubes_1,\tubes_2,\tubes_3$ be three sets of $\delta$--tubes in $\RR^4$. Then
\begin{equation}
\Big(\int \sum_{T_1,T_2,T_3\in\tubes_1\times\tubes_2\times\tubes_3}\chi_{T_1}\ \chi_{T_2}\ \chi_{T_3}\ v_1\wedge v_2\wedge v_3 \Big)^{1/2}\lesssim \delta^4 (|\tubes_1|\ |\tubes_2|\ |\tubes_3|)^{1/2}.
\end{equation}
\end{thm}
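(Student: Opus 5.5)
The plan is to deduce the trilinear estimate in $\RR^4$ from the endpoint $3$-linear Kakeya inequality in $\RR^3$ (Guth's endpoint theorem, or Bennett--Carbery--Tao with a $\delta^{-\eps}$ loss). The reduction has three ingredients: the wedge weight, a dyadic decomposition in the directions, and Fubini over slices. I read the left-hand side as $\int(\sum\chi_{T_1}\chi_{T_2}\chi_{T_3}|v_1\wedge v_2\wedge v_3|)^{1/2}$. This is the reading under which the scaling matches. For three tubes meeting at angles $\sim1$, their directions span a $3$-plane. Each tube confines three coordinates, so together they confine all four, and the intersection has volume $\sim\delta^4$, which matches the right-hand side with $|\tubes_i|=1$.

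\emph{Step 1 (localisation in direction).} Cover $S^3$ by caps of radius $\rho$, a small absolute constant, and split each $\tubes_i$ according to the cap of its direction. There are $O(1)$ triples of caps $(\tau_1,\tau_2,\tau_3)$, and on each triple $|v_1\wedge v_2\wedge v_3|\sim\theta_{\tau}$ up to an additive $O(\rho)$. By the triangle inequality for $(\cdot)^{1/2}$ (subadditivity) and Cauchy--Schwarz, it suffices to bound a single triple. For a triple with $\theta_\tau\lesssim\rho$, recurse: subdivide the caps dyadically to scale $\rho\theta$ and rescale. The weight $|v_1\wedge v_2\wedge v_3|$ is exactly what compensates for the rescaling loss, so the dyadic sum over $\theta$ converges (or loses at most $\log(1/\delta)$, which is harmless).

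\emph{Step 2 (slicing).} Fix a transverse triple with $\theta\sim1$. The directions then lie within $O(\rho)$ of a fixed $3$-plane $V$; let $w$ be its unit normal. Write $\RR^4=V\oplus\RR w$ and use Fubini in the $w$-coordinate $s$. For each $s$, the slice $T\cap(V+sw)$ of a tube $T$ is, after the linear change of variables along $w$, a $\delta$-tube in $V\cong\RR^3$ of length $\lesssim 1$ (possibly truncated). This uses that $v(T)$ makes angle $\le\rho$ with $V$, so the slice of the tube is essentially a tube in $V$ of cross-section comparable to $\delta$. The slice is nonempty only for $s$ in an interval $I_T$ of length $\lesssim1$. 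Apply endpoint multilinear Kakeya in $\RR^3$ to the sliced families $\tubes_i(s)$:
\[
\int_V\Big(\sum\chi\chi\chi\Big)^{1/2}\lesssim \delta^3\big(|\tubes_1(s)||\tubes_2(s)||\tubes_3(s)|\big)^{1/2}.
\]

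\emph{Step 3 (integrating in $s$).} Integrate the slice bound over $s\in[-2,2]$ and use $|\tubes_i(s)|\le|\tubes_i|$. This produces $\delta^3(\prod|\tubes_i|)^{1/2}$, which is one factor of $\delta$ short. The lost factor must come from the fact that three transverse tubes meet only for $s$ in a window of width $\sim\delta$ around each common point, and this is the main obstacle. The fix I would try first is to not slice blindly. Instead, apply Guth's endpoint theorem in its general $k$-linear form in $\RR^n$ ($k=3$, $n=4$), which is proved directly by the polynomial method or Bennett--Carbery--Tao heat flow and is valid in any dimension with weight $|v_1\wedge\cdots\wedge v_k|$ and exponent $1/(k-1)$. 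Alternatively, one can first project along $w$ only the \emph{pairs} of tubes and use that the $w$-coordinate of the intersection point of $T_1\cap T_2$ is determined to precision $\delta$. The first route is cleaner, and I would adopt it, citing the Bourgain--Guth/Bennett--Carbery--Tao $k$-linear statement and checking the scaling as above.
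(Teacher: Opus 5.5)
Your adopted route is the paper's, which gives no argument of its own. It quotes \cite[Theorem 6]{BG} with the left side read exactly as you read it; its later use is $\int(\sum\chi_{T_1}\chi_{T_2}\chi_{T_3}\,\theta)^{1/2}\lesssim\delta^4(N_1N_2N_3)^{1/2}$. It adds only the remark that the Bourgain--Guth proof, written for $|\tubes_1|=|\tubes_2|=|\tubes_3|$, also gives unequal cardinalities, which that use requires; quoting Bennett--Carbery--Tao instead would cost a factor $\delta^{-\eps}$.

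Steps 1--3 should not be kept even as a heuristic, because slicing by the hyperplanes $V+sw$ genuinely fails for tubes tilted out of $V$. A tube with $|v\cdot w|=\beta\gg\delta$ meets slices over an $s$-range of length $\sim\beta$, so even H\"older in $s$ leaves a loss of $\beta/\delta$. Integrating first along the lines $x+\RR w$ does work for transverse triples of caps: each such line meets a tube in length $\lesssim\delta$, so H\"older gives $\int(\prod_i\sum_{T\in\tubes_i}\chi_T)^{1/2}\,ds\lesssim\delta\prod_iF_i(x)^{1/2}$ with $F_i=\sum_{T\in\tubes_i}\chi_{\pi_V T}$, which reduces that case to $3$-linear Kakeya in $V$. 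The small-wedge recursion in Step 1, however, is asserted rather than proved.
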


\begin{rem} Theorem 6 from \cite{BG} states this result for the case $|\tubes_1|=|\tubes_2|=|\tubes_3|$, but the proof gives the estimate above when the three cardinalities are not equal. \end{rem}
\subsubsection{Grains decomposition}

This was developed in Guth-Zahl \cite{GZ}. We refer the reader there for more details/expanded discussion.

\begin{defn}[Grain]\label{defgr}
A grain (of complexity $d$) in $\RR^n$ is the $C\delta$--neighborhood of a semi-algebraic set of dimension $\leq n-1$ and complexity $\leq d$.
\end{defn}

\begin{defn}[Grains decomposition]
Let $\mathcal{Q}$ be a set of cubes in $\RR^n$. A \emph{Grains decomposition} of $\mathcal{Q}$ of degree $d$ and error $\epsilon$ is a set of grains $\mathcal{G}$, each of which has complexity $\leq d$. This decomposition has the following properties.
\begin{itemize}
\item The cubes are evenly-distributed across the grains: For each $G\in\mathcal{G}$, there is a set $\mathcal{Q}_G\subset \mathcal{Q}$. The sets $\{\mathcal{Q}_G\}$ are disjoint; if $Q\in\mathcal{Q}_G$ then $Q\subset G$. We have 
\begin{equation}\label{mostCubesCaptured}
\sum_{G\in\mathcal{Q}}|\mathcal{Q}_G|\geq\delta^\epsilon|\mathcal{Q}|, 
\end{equation}
and
\begin{equation}\label{grainsSameSize}
\delta^{\epsilon}|\mathcal{Q}|/|\mathcal{G}|\leq|\mathcal{Q}_G|\leq\delta^{-\epsilon}|\mathcal{Q}|/|\mathcal{G}|.
\end{equation}

\item A tube doesn't intersect too many grains: If $T$ is a $\delta$--tube, then
\begin{equation}\label{notTooManyTubesPerGrain}
|\{G\in\mathcal{G}\colon T\cap Q\neq\emptyset\ \textrm{for some}\ Q\in\mathcal{Q}_G\}|\leq C_\epsilon\delta^{-\epsilon} |\mathcal{G}|^{1/n}.
\end{equation}
\end{itemize}
\end{defn}

\begin{prop}\label{existsGrainsDecomp}
Let $\mathcal{Q}$ be a set of $\delta$-cubes in $\RR^n$. Then for each $\epsilon>0$, there exists a grains decomposition of $\mathcal{Q}$ of degree $d(\epsilon)$ and error $\epsilon$.
\end{prop}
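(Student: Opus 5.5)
We would prove Proposition \ref{existsGrainsDecomp} by iterated polynomial partitioning, following Guth--Zahl \cite{GZ}. Fix $\epsilon>0$ and choose a degree $D=D(\epsilon)$ large but independent of $\delta$. Apply the Guth--Katz polynomial partitioning theorem to the measure given by the union of the cubes in $\mathcal{Q}$. This gives a polynomial $P$ of degree $\le D$ such that $\RR^n\setminus Z(P)$ has $\sim D^n$ cells, each containing $\sim D^{-n}$ of the mass. We then split $\mathcal{Q}$ into a \emph{cellular part} and a \emph{wall part}. The cellular part consists of the cubes lying inside a single cell, away from $N_{C\delta}(Z(P))$. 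The wall part consists of the cubes meeting $N_{C\delta}(Z(P))$. If the wall part carries at least half the cubes, we stop: $N_{C\delta}(Z(P)\cap[-1,1]^n)$ is a single grain of complexity $\lesssim D$. Otherwise we recurse inside each cell on the cubes it contains.

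The recursion has two stopping conditions. Along each branch we stop either when a wall case occurs, or when the scale reaches $\delta$, meaning a cell contains only $O(1)$ cubes. The second case gives degenerate grains. At that point each such cube can be taken as the $C\delta$-neighborhood of a hyperplane piece through it, which has bounded complexity. The total number of steps is $J\lesssim \log(1/\delta)/\log D$. At each step we also discard a constant fraction of the branches, for example those on the minority side of the cellular/wall dichotomy. We use dyadic pigeonholing at every stage so that all surviving cells at a given level carry comparable numbers of cubes.

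The losses compound to roughly $C^{J}=\delta^{-O(1/\log D)}$, which is at most $\delta^{-\epsilon}$ once $D(\epsilon)$ is large. This gives both \eqref{mostCubesCaptured} and \eqref{grainsSameSize}. For the even distribution, a second pigeonhole over the step at which the stopping occurred, and over dyadic sizes of $|\mathcal{Q}_G|$, selects a subfamily of grains of comparable size. This costs only a $\log(1/\delta)$ factor. To make the complexity independent of $\delta$, each grain is taken as the neighborhood of the final wall $Z(P_j)$ intersected with the semi-algebraic cell it lives in. The ambient cell is described by the sign conditions of the earlier polynomials, which looks like unbounded complexity. This is the first technical issue, and I would handle it by defining the grain as $N_{C\delta}(Z(P_j))$ intersected with a bounded-complexity box containing the cell's cubes. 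This is harmless because the sets $\mathcal{Q}_G$ record the disjointness.

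The main obstacle is \eqref{notTooManyTubesPerGrain}, the bound on how many grains a tube meets. A line crosses $Z(P)$ at most $D$ times, so it enters at most $D+1$ cells at each level. After $j$ levels it therefore meets at most $(D+1)^j$ of the $\sim D^{nj}$ cells. Since $|\mathcal{G}|\approx D^{nj}$ (up to the pigeonholing losses), this gives $(D+1)^j\approx |\mathcal{G}|^{1/n}\delta^{-O(1/\log D)}$.

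A $\delta$-tube is not a line, however, and it may meet many more cells near a wall. To handle this, the cellular part should only count cubes at distance $\gtrsim \delta$ from the wall, at the current scale. The tube then meets a cell's retained cubes only if its core line, perturbed by $\delta$, enters that cell. We control this by a standard trick: apply the line-crossing bound to $O(1)$ perturbed polynomials $P(x+\delta w)$, or equivalently note that cubes within $\delta$ of the wall belong to the wall part. The bookkeeping across scales, with the wall-versus-cell decision made at each level in the presence of the $\delta$-thickening, is where I expect most of the technical effort.
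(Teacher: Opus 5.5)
Your proposal is essentially the argument the paper cites from \cite{GZ}. It iterates polynomial partitioning, closes a branch with the single grain $N_{C\delta}(Z(P))$ in the algebraic case (letting $\mathcal{Q}_G$ rather than the grain record the cell, so complexity stays $O_D(1)$), recurses in the cellular case, and obtains \eqref{notTooManyTubesPerGrain} because a tube meeting a cube at distance $\ge C\delta$ from every ancestor wall has its core line in that cell, so it meets at most $(D+1)^j$ level-$j$ cells. One bookkeeping caution: do not pigeonhole dyadically at every level, since a $\log(1/\delta)$ loss per level over $J\sim\log(1/\delta)/\log D$ levels compounds to $\delta^{-O(\log\log(1/\delta)/\log D)}$, which is not $\delta^{-\epsilon}$ for fixed $D$; instead use the partitioning bound $|\mathcal{Q}_G|\le C^jD^{-nj}|\mathcal{Q}|$ for grains born at level $j$ plus a single final pigeonhole over the stopping level and the dyadic size of $|\mathcal{Q}_G|$, which gives $|\mathcal{G}|\gtrsim\delta^{O(\epsilon)}D^{nj}$, exactly what the $(D+1)^j$ count needs.
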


This proposition is proved by polynomial partitioning (see \cite{GZ} for proof). We will only use Proposition \ref{existsGrainsDecomp} in the special case $n=4$.

\subsection{Proof of Theorem \ref{GZTrilinearProp}}

\begin{proof}[Proof of Theorem \ref{GZTrilinearProp}]
Fix $\epsilon>0$. Let $d(\cdot)$ be the function from Proposition \ref{existsGrainsDecomp}. 

For simplicity of notation, we will replace each tube $T\in\tubes$ with the union of all $\delta$--cubes that intersect $T$; thus we should think of each $T\in\tubes$ as a union of cubes. 

Now we do some dyadic pigeonholing.  First we dyadic pigeonhole $|v_1 \wedge v_2 \wedge v_3|$.  There is a dyadic $\theta$ so that

$$\int\Big(\sum_{T_1,T_2,T_3\in\tubes}\chi_{T_1}\ \chi_{T_2}\ \chi_{T_3}\ |v_1\wedge v_2\wedge v_3|^{9/13}\Big)^{13/27} \sim \int\Big(\sum_{\substack{T_1,T_2,T_3\in\tubes\\ v_1\wedge v_2\wedge v_3 \sim\theta}
} \chi_{T_1}\ \chi_{T_2}\ \chi_{T_3}\ |v_1\wedge v_2\wedge v_3|^{9/13}\Big)^{13/27}. $$

Next we divide $B(0,2)$ into $\delta$-cubes $Q$ and we dyadic pigeonhole their contribution to the last integral.  We can choose a set of cubes $\mathcal{Q}$ with $\cup_{Q \in \mathcal{Q}} Q = A$ so that

$$\int_{B(0,2)} \Big(\sum_{T_1,T_2,T_3\in\tubes}\chi_{T_1}\ \chi_{T_2}\ \chi_{T_3}\ |v_1\wedge v_2\wedge v_3|^{9/13}\Big)^{13/27} \sim \int_A \Big(\sum_{\substack{T_1,T_2,T_3\in\tubes\\ v_1\wedge v_2\wedge v_3 \sim\theta}
} \chi_{T_1}\ \chi_{T_2}\ \chi_{T_3}\ |v_1\wedge v_2\wedge v_3|^{9/13}\Big)^{13/27}, $$

\noindent and so that each $Q \in \mathcal{Q}$ makes a roughly equal contribution to the right-hand side.  

Let $\mathcal{G}$ be a grains decomposition of $\mathcal{Q}$ with error $\epsilon$ and degree $d=d(\epsilon)$.  Next we consider how much a tube intersects a grain.  The intersection of a tube with a grain could have a few different connected components, perhaps with different lengths.  Since the grain is a semi-algebraic set of complexity at most $d$, the number of these components is $O_d(1)$, and there is no harm in treating them separately.  If $W\subset\RR^4$, then we write $\operatorname{CC}(W,x)$ for the Euclidean connected component of $W$ containing $x$.  If $G$ is a grain and $T$ is a tube, then note that $\operatorname{diam}(\operatorname{CC}(T\cap G,x ))$ is the length of the component of $T \cap G$ thru the point $x$.  Next we dyadic pigeonhole these lengths:  we can find $\ell_1,\ell_2,\ell_3$ so that
\begin{equation}\label{pigeonholeEllI}
(\operatorname{LHS}\ \eqref{trilinearBoundFromGZ})\lessapprox\sum_{G\in\mathcal{G}}\int_{G\cap A} \Big(\sum_{\substack{T_1,T_2,T_3\in\tubes\\v_1\wedge v_2\wedge v_3 \sim\theta\\ \ell_i\leq \operatorname{diam}(\operatorname{CC}(T_i\cap G,x ))\leq 2\ell_i } } \theta^{9/13}\ \chi_{T_1}\ \chi_{T_2}\ \chi_{T_3}\Big)^{13/27}.
\end{equation} 

Without loss of generality, we can assume that $\ell_1=\max(\ell_1,\ell_2,\ell_3)$. Cover each grain $G$ by balls of radius $C\ell_1$ so that the balls are $O(1)$ overlapping, and any subset of $G$ of diameter $\leq 2\ell_1$ is entirely contained within one of the balls. The intersection of $G$ with a ball of this type will be called a sub-grain $G^\prime$ with parent $G$. If $G^\prime$ is a sub-grain of $G$, let $\mathcal{Q}_{G^\prime}=\{Q\in\mathcal{Q}_G\colon Q\subset G^\prime\}$. Let $\mathcal{G}^\prime$ denote the set of sub-grains.

Note that if $T$ is a $\delta$--tube and $G\in\mathcal{G}$ is a grain satisfying $T\cap G\neq\emptyset$, then there are $O_d(1)$ sub-grains $G^\prime\in\mathcal{G}^\prime$ with $G^\prime\subset G$ that contain a point $x\in T\cap G^\prime$ with $\operatorname{diam}(\operatorname{CC}(T_i\cap G),x)\leq 2\ell_i$. This is because $T\cap G$ is a semi-algebraic set of bounded-complexity, so it is a union of $O_d(1)$ connected sets (see \cite{BCR} for further information and background on semi-algebraic sets). Each of these sets with diameter $\leq 2\ell_i$ intersects at most $O(1)$ sub-grains $G^\prime$. Furthermore, if there exists $x\in T\cap G$ with $\ell_i\leq \operatorname{diam}(\operatorname{CC}(T_i\cap G),x)\leq 2\ell_i$, then there exists at least one sub-grain $G^\prime$ with $c_d\ell_i\leq \operatorname{diam}(\operatorname{CC}(T_i\cap G^\prime),x)\leq 2\ell_i$, where $c_d>0$ depends only on $d$. For each sub-grain $G^\prime$, define
$$
\tubes_{i,G^\prime}=\{T\cap G^\prime\colon\ \textrm{there exists a component}\ W\subset T\cap G\ \textrm{with}\ W\subset G^\prime,\ c_d\ell_i\leq \operatorname{diam}(W)\leq 2\ell_i\}.
$$

Thus for each $i=1,2,3$,
\begin{equation}\label{smallGrainsComparableLarge}
\sum_{G\in\mathcal{G}}|\{ T\in\tubes\colon \textrm{there exists a component}\ W\subset T\cap G\ \textrm{with}\ \ell_i\leq \operatorname{diam}(W)\leq 2\ell_i\}|\sim \sum_{G^\prime\in\mathcal{G}^\prime}|\tubes_{i,G^\prime}|.
\end{equation}

After dyadic pigeonholing and refining the grains in $\mathcal{G}^\prime$, we can assume that \eqref{mostCubesCaptured}, \eqref{grainsSameSize}, and \eqref{notTooManyTubesPerGrain} hold for $\mathcal{G}^\prime$ (doing this may have made $|\mathcal{G}^\prime|$ smaller by a factor of $\lessapprox 1$). Furthermore, since each grain $G^\prime\in\mathcal{G}^\prime$ is contained in a ball of radius $\ell_1$, we have 
\begin{equation}\label{grainSize}
|G^\prime|\lesssim \ell_1^3\delta
\end{equation}
for all $G^\prime\in\mathcal{G}^\prime$, where the implicit constant depends only on $d(\epsilon),$ which in turn depends only on $\epsilon$ (see e.g.~\cite{Won}).

After further dyadic pigeonholing the grains $G\in\mathcal{G}^\prime$, we can find numbers $N_1,N_2,N_3$ so that $N_i\leq |\tubes_{i,G^{\prime}}|\leq 2N_i$ for each $G^{\prime}\in\mathcal{G}^{\prime}$.  Since $\mathcal{G}^\prime$ obeys \eqref{notTooManyTubesPerGrain},
\begin{equation*}
N_i|\mathcal{G}^{\prime}|\lessapprox C_\epsilon\delta^{-\epsilon} |\tubes|\ |\mathcal{G}^{\prime}|^{1/4},
\end{equation*}
i.e.
\begin{equation}\label{boundOnNi}
N_i \lessapprox  C_\epsilon\delta^{-\epsilon}|\tubes|\ |\mathcal{G}^{\prime}|^{-3/4},\qquad i=1,2,3.
\end{equation}

Let $\mathcal{Q}^{\prime}=\bigcup_{\mathcal{G}^{\prime}}\mathcal{Q}_G$. After dyadic pigeonholing, we can find numbers $\mu$ and $\mu_1,\mu_2,\mu_3$ so that if we refine $\mathcal{Q}^{\prime}$ and the associated sets $\mathcal{Q}_{G^\prime}$, then if $Q\in\cup\mathcal{Q}_{G^\prime}$ for some $G^\prime\in\mathcal{G}^{\prime}$ and if $x\in Q$, then
$$
\sum_{\substack{(T_1,T_2,T_3)\in\tubes_{1,G^\prime}\times \tubes_{2,G^\prime}\times \tubes_{3,G^\prime}\\v_1\wedge v_2\wedge v_3 \sim\theta} }\chi_{T_1}(x)\chi_{T_2}(x)\chi_{T_3}(x)\sim \mu^3,
$$
and $\sim\mu_i$ tubes from $\tubes_i$ pass through $x$ for each $i=1,2,3$. Note that
\begin{equation}\label{muPrimeVsMu123}
\mu \leq(\mu_1\mu_2\mu_3)^{1/3}. 
\end{equation}
\begin{rem}
The LHS of \eqref{muPrimeVsMu123} might be much smaller than the RHS if (for example) the main contribution to \eqref{trilinearBoundFromGZ} from a typical point comes from triples with $v_1\wedge v_2\wedge v_3\sim 1$, but at the same time most triples through a typical point have $v_1\wedge v_2\wedge v_3$ much smaller than 1.
\end{rem}

After a further refinement of $\mathcal{G}^{\prime}$ to throw away those grains for which $|\mathcal{Q}_G|$ is small, we can assume that $|\mathcal{Q}_G|\approx |\mathcal{Q}^{\prime}|/|\mathcal{G}^{\prime}|$ for all $G\in\mathcal{G}^{\prime}$. 

Let $A^{\prime}=\bigcup_{\mathcal{Q}^{\prime}}Q$. Then $|A^{\prime}|\approx |A|$.  Because of all the dyadic pigeonholing, we have

\begin{equation} \label{simpLHS} (\operatorname{LHS}\ \eqref{trilinearBoundFromGZ}) \approx |A^{\prime}| \mu^{13/9} \theta^{1/3}. \end{equation}

To prove \eqref{trilinearBoundFromGZ}, it suffices to show that

\begin{equation} \label{trilineargoal} |A^\prime|^9 \mu^{13} \theta^3 \lesssim  \delta^{-3 - 9 \eps} (\delta^3 | \tubes| )^{12}. \end{equation}

Let $i_0$ be the index so that $\mu_{i_0}=\max(\mu_1,\mu_2,\mu_3)$. So in particular, \eqref{muPrimeVsMu123} implies
\begin{equation}\label{muLeqMuI0}
\mu \leq\mu_{i_0}.
\end{equation}

We note that 
\begin{equation}
\begin{split}
\mu_i|A^{\prime}| &\lesssim \sum_{G^\prime\in\mathcal{G}^{\prime}} \sum_{T\in\tubes_{i,G}}|T\cap A^{\prime}\cap G^\prime|\\
&\leq \sum_{G^\prime\in\mathcal{G}^{\prime}} \sum_{T\in\tubes_{i,G^\prime}}|T\cap G^\prime|\\
&\leq  |\mathcal{G}^{\prime}|N_i\ell_i\delta^3\\
&\lessapprox  C_\epsilon\delta^{-\epsilon}|\mathcal{G}^{\prime}|^{1/4}\ell_i(\delta^3|\tubes|),
\end{split}
\end{equation}
where on the last line we used \eqref{boundOnNi}. Thus
$$
\ell_i\gtrapprox  c_\epsilon\delta^{\epsilon}\mu_i\ |A^{\prime}|\ |\mathcal{G}^{\prime}|^{-1/4}(\delta^3|\tubes|)^{-1}.
$$
In particular, since $\ell_1=\max(\ell_1,\ell_2,\ell_3)$, we have
\begin{equation}\label{ell1MuI0}
\ell_1\gtrapprox  c_\epsilon\delta^{\epsilon}\mu_{i_0}|A^\prime|\ |\mathcal{G}^{\prime}|^{-1/4}(\delta^3|\tubes|)^{-1}.
\end{equation}

Everything up until this point was the same as in the proof of Proposition 3.1 in \cite{GZ}. Over here the proof diverges. Instead of applying polynomial Wolff axioms to get an upper bound on $N_1$, we use Wang-Zahl \cite{WZ3} inside a grain to do the same. Polynomial Wolff axioms of course show up in the proof of applying Wang-Zahl inside a grain (namely Proposition \ref{wangzahlgrain}), so it is baked inside here as well.

Let $G^\prime\in\mathcal{G}^\prime$. By applying Proposition \ref{wangzahlgrain}, we get that
\begin{equation}
\frac{|A^\prime|}{|\mathcal{G}^\prime|}\gtrapprox \delta^3 N_1 l_1^4.
\end{equation}
Rearranging and applying \eqref{ell1MuI0}, we get
\begin{equation}\label{gamechanger}
N_1\lessapprox \frac{|A^\prime|}{|\mathcal{G}^\prime|\delta^3 l_1^4}\lessapprox \delta^{-3}\mu_{i_0}^{-4}|A^\prime|^{-3}(\delta^{3}|\tubes|)^4.
\end{equation}

Furthermore,

\begin{equation}
\begin{split}
|A^\prime|\ |\mathcal{G}^\prime|^{-1}\mu^{3/2}\theta^{1/2}&\approx\int_{G^\prime\cap A^\prime} \Big(\sum_{\substack{T_1,T_2,T_3\in\tubes_{1,G^\prime}\times\tubes_{2,G^\prime}\times\tubes_{3,G^\prime}\\v_1\wedge v_2\wedge v_3 \sim\theta\\ } }\theta \chi_{T_1}\chi_{T_2}\chi_{T_3}\Big)^{1/2}\\
&\lesssim \delta^4(N_1N_2N_3)^{1/2}\\
&\lessapprox \delta(\delta^3|\tubes|) |\mathcal{G}^\prime|^{-3/4} N_1^{1/3}N_1^{1/6}\\
&\lessapprox \delta (\delta^3|\tubes|)|\mathcal{G}^\prime|^{-3/4} ( |\tubes|\ |\mathcal{G}^\prime|^{-3/4})^{1/3}(\delta^{-3}\mu_{i_0}^{-4}|A^\prime|^{-3}(\delta^{3}|\tubes|)^4)\delta^{-2})^{1/6}\\
&\lessapprox \delta^{-1/2}|\mathcal{G}^\prime|^{-1}\mu_{i_0}^{-4/6}|A^\prime|^{-1/2}(\delta^3|\tubes|)^{2},
\end{split}
\end{equation}
where on the third line we used Theorem \ref{BourgainGuthTHhm}, on the fourth line we used \eqref{boundOnNi}, and on the fifth line we used \eqref{boundOnNi} and \eqref{gamechanger}. Instead of splitting the exponent of $N_1$ in the form 3/8+1/8=1/2 as in Guth-Zahl \cite{GZ}, splitting it as 1/6+1/3=1/2 the way we have done above is what works for the numerology we have at hand.

Re-arranging and using \eqref{muLeqMuI0} and the fact that $|A^\prime|\approx|A|$, we have 
$$
|A|^9\mu^{13}\theta^3\lessapprox  \delta^{-3}(\delta^3|\tubes|)^{12}.
$$
This establishes \eqref{trilineargoal} and completes the proof of Theorem \ref{GZTrilinearProp}.

\end{proof}
%
%
%
%


%
%
%

%
%
%

\section{The Planebrush argument}\label{planebrushSection}

In this section, we shall state the results from the paper of Katz and Zahl \cite{KZ} that we will be using. Proofs will be omitted for the sake of brevity and can be found in the aforementioned reference. First we need the following definition.

\begin{defn}[Planyness]
Let $(\tubes,Y)$ be a set of $\delta$-tubes and their associated shading. We say that $(\tubes,Y)$ is plany if for each $\delta$-cube $Q$, there is a plane $\Pi(Q)$ so that for all $T\in\tubes_Y(Q)$ we have $\angle(v(T),\Pi(Q))\lesssim\delta$.
\end{defn}

Next we need the following technical lemma which states that tubes are either plany or trilinear. This is Lemma 33 from Katz-Zahl \cite{KZ}.

\begin{lem}\label{planyOrTrilinearLem}
Let $(\tubes,Y)$ be a set of $\delta$-tubes and their associated shading. Suppose that $(\tubes,Y)$ is $(\epsilon_1,C_1)$-two-ends and $(\epsilon_2,C_2)$-robustly transverse.

Then there exists a number $\delta\leq\theta\leq 1$; a refinement $(\tubes^\prime,Y^\prime)$ of $(\tubes,Y)$; a set $\Omega\subset S^3$ of $\theta$-separated points; a set $(\tubes_{\theta},Y_\theta)$ of essentially distinct $\theta$-tubes and their associated shading; and numbers $B,B_1,B_2\geq 1$ with $B_1\geq B_2$ and $B_1/B_2 \leq B$ so that the following holds.
\begin{enumerate}[label=(\alph*)]
\item\label{deltaCubePlane} For each $Q\in\mathcal{Q}(Y^\prime)$, there is a plane $\Pi(Q)$ so that $\angle(v(T),\Pi(Q))\leq\theta$ for all $T\in\tubes_Y(Q)$.

\item\label{mostTriplesWedgeThetaItem} For each $Q\in\mathcal{Q}(Y^\prime)$,
\begin{equation}\label{mostTriplesWedgeTheta}
|\{T_1,T_2,T_3\in \tubes_{Y^\prime}(Q)\colon |v(T_1)\wedge v(T_2)\wedge v(T_3)|\gtrsim \theta\}|\gtrapprox|\tubes_{Y^\prime}(Q)|^3.
\end{equation}

\item\label{directionsThetaTubes} Every tube in $\tubes_\theta$ points in a direction from $\Omega$. For each $v\in\Omega$, there are $\sim|\tubes_\theta|/|\Omega|$ tubes from $\tubes_\theta$ that point in direction $v$.

\item\label{thinTubesInsideThetaTubes} Each tube $T_\theta \in\tubes_\theta$ contains $\sim |\tubes^\prime|/|\tubes_\theta|$ tubes from $\tubes^\prime$, and each tube from $\tubes^\prime$ is contained in exactly one tube from $\tubes_\theta$.

\item\label{fineShadingInsideCourseShading} If $T\in\tubes, T_\theta\in\tubes_\theta$, and $T\subset T_\theta$, then $Y^\prime(T)\subset Y_\theta(T_\theta)$.

\item\label{lowerBoundLambdaYTheta} $\lambda_{Y_\theta}\gtrapprox \lambda_{Y^\prime}$. 

\item\label{thetaTubesAreTwoEnds} $(\tubes_\theta,Y_\theta)$ is $(\eps_1,C_1^\prime)$-two-ends, with $C_1^\prime\lessapprox 1$.

\item\label{tubesHaveUniformMult} For each $\delta$-cube $Q\in\mathcal{Q}(Y)$, we have $|\tubes_{Y^\prime}(Q)|\sim \mu_{Y^\prime}$. For each $\theta$-cube $Q\in\mathcal{Q}(Y_\theta)$ we have $|\tubes_{\theta}(Q)|\sim \mu_{Y_\theta}$.

\item\label{innerTubesHaveUniformMult} There is a number $\mu_{\operatorname{fine}}$ so that for each $T_\theta\in\tubes_{\theta}$, we have $\mu_{Y_{T_\theta}}\sim\mu_{\operatorname{fine}}$. 

\item\label{thetaTubesInPlanes} For each $\theta$-cube $Q_\theta\in\mathcal{Q}(Y_\theta)$, there are planes $\Pi_1(Q_\theta),\ldots,\Pi_{B_1}(Q_\theta)$ and collections of tubes $\tubes_{1,\theta}(Q_\theta),\ldots,\tubes_{B_1,\theta} \subset\tubes_\theta(Q_\theta)$ so that for each index $i$, the tubes in $\tubes_{i,\theta}(Q_\theta)$ satisfy $\angle(T_\theta,\Pi_i(Q_\theta))\leq\theta$ and $|\tubes_{i,\theta}|=\mu_{Y_\theta}/B$. Each tube $T\in\tubes_{\theta}(Q)$ is contained in $B_2$ of sets $\{\tubes_{i,\theta}(Q_\theta)\}.$ 

\item\label{productMultiplicity} We have the multiplicity bound
\begin{equation*}
\mu_{Y^\prime}\lessapprox \mu_{Y_{\theta}}\mu_{\operatorname{fine}}/B.
\end{equation*}

\end{enumerate}
\end{lem}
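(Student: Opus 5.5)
This lemma is Lemma 33 of Katz--Zahl and uses nothing new from this paper, so the plan is to follow their multiscale pigeonholing argument. The guiding principle is that at every $\delta$-cube $Q$ we measure how ``plany'' the bush of tubes through $Q$ is, as a function of scale.

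\textbf{Step 1: choosing $\theta$ and the local planes.} First refine $(\tubes,Y)$ so that $|\tubes_Y(Q)|\sim\mu_Y$ for all $Q$. For each $Q$ and each dyadic $\rho\in[\delta,1]$, consider the largest fraction of $\tubes_Y(Q)$ whose directions lie within angle $\rho$ of a single $2$-plane. This fraction is nondecreasing in $\rho$. It equals $1$ at $\rho=1$, and it is $\lessapprox 1$-small at $\rho=\delta$ unless $Q$ is already plany.

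Take $\theta$ to be essentially the smallest scale at which the fraction is $\gtrapprox 1$. Pigeonhole $\theta$ over $Q$, at a cost of $|\log\delta|$ factors. Then restrict $\tubes_Y(Q)$ to the tubes within angle $\theta$ of the maximizing plane $\Pi(Q)$; this gives \ref{deltaCubePlane}.

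Minimality of $\theta$ means that at scale $\theta/C$ no plane captures a positive proportion of the restricted bush. A counting argument on the sphere then shows that most triples have wedge $\gtrsim\theta$: fix $T_1,T_2$ with angle $\gtrsim\theta$, and note that $T_3$ must avoid a $\theta/C$-neighborhood of their span. This is \ref{mostTriplesWedgeThetaItem}, and robust transversality is used to guarantee that pairs with angle $\gtrsim\theta$ are abundant.

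\textbf{Step 2: the $\theta$-tubes.} Cover the tubes by $\theta$-tubes $T_\theta$, assigning each $T$ to one $T_\theta$ containing it. Pigeonhole the following quantities so they are uniform:
\begin{itemize}
\item the number of $\delta$-tubes per $T_\theta$, and the number of $\theta$-tubes per direction in a $\theta$-separated set $\Omega$;
\item the fine multiplicity $\mu_{Y_{T_\theta}}$ inside each $T_\theta$;
\item the coarse multiplicity.
\end{itemize}
Define $Y_\theta(T_\theta)$ to be the union of the $\theta$-cubes meeting the fine shadings.

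These choices give \ref{directionsThetaTubes}, \ref{thinTubesInsideThetaTubes}, \ref{fineShadingInsideCourseShading}, \ref{tubesHaveUniformMult} and \ref{innerTubesHaveUniformMult}. Item \ref{lowerBoundLambdaYTheta} follows because shading density can only increase under coarsening. The two-ends property \ref{thetaTubesAreTwoEnds} is inherited: a ball of radius $r\ge\theta$ contains at most an $r^{\eps_1}$ fraction of each fine shading, hence of the coarse shading, after discarding a small fraction.

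\textbf{Step 3: planes at $\theta$-cubes and the multiplicity bound.} At a $\theta$-cube $Q_\theta$, each $\delta$-cube $Q\subset Q_\theta$ contributes its plane $\Pi(Q)$. The $\theta$-tubes through $Q_\theta$ containing tubes through $Q$ lie within $\theta$ of $\Pi(Q)$. Collect the distinct planes, up to $\theta$-equivalence, as $\Pi_1,\dots,\Pi_{B_1}$, and pigeonhole so that each plane class has $\mu_{Y_\theta}/B$ tubes and each tube lies in $B_2$ classes; this gives \ref{thetaTubesInPlanes}. The constraint $B_1/B_2\le B$ then follows from double counting incidences:
\[
B_1\cdot\frac{\mu_{Y_\theta}}{B}=\mu_{Y_\theta}B_2 .
\]

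For \ref{productMultiplicity}, a $\delta$-cube $Q$ sees only tubes whose $T_\theta$ lies in the single class of $\Pi(Q)$, which has size $\mu_{Y_\theta}/B$. Each such $T_\theta$ contributes $\lesssim\mu_{\mathrm{fine}}$ tubes through $Q$, so
\[
\mu_{Y'}\lessapprox \mu_{Y_\theta}\,\mu_{\mathrm{fine}}/B .
\]

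\textbf{Main obstacle.} The hard part is doing all the pigeonholings compatibly: refinements at the fine scale must not destroy the uniformity already achieved at the coarse scale, and the refined pair must remain a $\gtrapprox1$-refinement while keeping \ref{mostTriplesWedgeThetaItem}. I would handle this by ordering the pigeonholing from coarse to fine and iterating $O(|\log\delta|)$ times, as Katz--Zahl do.
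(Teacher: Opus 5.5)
The paper does not prove this lemma; it imports it verbatim from Katz--Zahl (their Lemma 33). Deferring to their argument is therefore consistent with the paper. Your Steps 2--3 are a sensible outline of the bookkeeping: coarse tubes, plane classes at $\theta$-cubes, the double count for $B_1/B_2$, and $\mu_{Y'}\lessapprox\mu_{Y_\theta}\mu_{\operatorname{fine}}/B$ via the single class containing $\Pi(Q)$.

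\textbf{The gap is in Step 1, which is the only source of item (b).} Write $F_Q(\rho)$ for the largest fraction of $\tubes_Y(Q)$ lying within angle $\rho$ of a single plane.
\begin{itemize}
\item \emph{Minimality does not give non-concentration.} Taking $\theta$ to be the least dyadic scale with $F_Q(\theta)\geq\tau$ only gives $F_Q(\theta/2)<\tau\leq F_Q(\theta)$. This says nothing about concentration inside the restricted bush. For example, suppose $\tau|\tubes_Y(Q)|-1$ tubes point exactly in a plane $\Pi'$, one further tube makes angle $\theta$ with $\Pi'$, and the remaining tubes are spread out at angle $\sim 1$ from $\Pi'$. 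Then the maximizing plane is essentially $\Pi'$, and almost every triple in the restricted bush has vanishing wedge. You need a stopping-time choice of $\theta$ that forces a definite \emph{relative} drop: at the chosen scale, no plane captures most of the current bush at scale $c\theta$.
\item \emph{The pairs must be at angle $\gtrsim 1$, not $\gtrsim\theta$.} The bound $\operatorname{dist}(v_3,\operatorname{span}(v_1,v_2))\gtrsim\theta$ gives $|v_1\wedge v_2\wedge v_3|\gtrsim\theta$ only when $|v_1\wedge v_2|\gtrsim 1$. With $\angle(v_1,v_2)\sim\theta$, as you wrote, you get only $\theta^2$. The job of robust transversality is to produce pairs at angle $\geq r_0$ with $r_0\gtrapprox 1$. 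In a sub-bush of size $\gtrapprox\mu_Y$, at most half its tubes lie within $r_0$ of any fixed direction once $r_0^{\eps_2}C_2\mu_Y$ is less than half its size.
\end{itemize}

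\textbf{Your remedy for the compatibility problem fails quantitatively.} Property (b) is not inherited by sub-bushes. The later pigeonholings delete tubes from $\tubes_{Y'}(Q)$: discarding $\theta$-tubes to equalize the number of fine tubes per $\theta$-tube, discarding plane classes to obtain (j), and equalizing fine multiplicities. A $|\log\delta|^{-C}$-fraction of a bush satisfying (b) may lie within $c\theta$ of one plane. Re-running the construction $O(|\log\delta|)$ times compounds the $|\log\delta|^{-C}$ losses into a factor $\delta^{C\log|\log\delta|}$, which is not $\gtrapprox 1$.

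One repair is to make (b) robust already in Step 1:
\begin{itemize}
\item Descend in scale steps of $\delta^{\eta}$. Pass to a plane whenever it captures at least a $|\log\delta|^{-2C}$-fraction of the current bush within angle $\delta^{\eta}$ times the current scale; stop otherwise.
\item There are at most $1/\eta$ steps, so the total loss is $\gtrapprox 1$.
\item At the stopping scale, the non-concentration survives passage to any sub-bush of relative size $\geq|\log\delta|^{-C}$.
\item The wedge bound weakens only by $\delta^{\eta}$. This is harmless because $\theta$ enters Corollary \ref{trilinearVolumeBound} only through $\theta^{3/4}$.
\end{itemize}

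\textbf{Item (g) is not inherited automatically either.} Many two-ended fine shadings inside one $\theta$-tube can project onto a concentrated set of $\theta$-cubes. You need to pigeonhole the fine mass carried by each $\theta$-cube and discard the light ones. Then bound the number of $\theta$-cubes in a ball by the fine mass in that ball.
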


The planebrush argument, roughly speaking, states that a plany Kakeya set in $\RR^4$ has dimension 10/3. This was proved by Katz and Zahl \cite{KZ}. We now state a technical version of this result which we will use. This is Proposition 34 from Katz-Zahl \cite{KZ}. Note that hypotheses have been chosen to match the conclusions of Lemma \ref{planyOrTrilinearLem}.

\begin{prop}\label{volumeBdPlainyTubes}
Let $0<\theta<1$ and let $0<\eps_1<1$. Let $\Omega\subset S^3$ be a set of $\theta$-separated directions. Let $(\tubes,Y)$ be a set of essentially distinct plany $\theta$-tubes and their associated shading. Suppose that 
\begin{itemize}
\item There are $\sim|\tubes|/|\Omega|$ tubes from $\tubes$ pointing in each direction $v\in\Omega$. 
\item $(\tubes,Y)$ is $(\eps_1,C_1)$-two-ends.
\item There are numbers $B,B_1,B_2$ with $B_1\geq B_2$ and $B_1/B_2 \leq B$ so that for each $Q\in\mathcal{Q}(Y)$, there are planes $\Pi_1(Q),\ldots,\Pi_{B_1}(Q)$ and collections of tubes $\tubes_{1}(Q),\ldots,\tubes_{B_1}(Q) \subset\tubes(Q)$ so that for each index $i$, the tubes in $\tubes_{i}(Q)$ satisfy $\angle(T,\Pi_i(Q))\leq\theta$ and $|\tubes_{i}|=\mu_{Y}/B$. Each tube $T\in\tubes(Q)$ is contained in $B_2$ sets from $\{\tubes_{i}(Q)\}.$ 
\end{itemize}
Then for each $\eps>0$, we have

\begin{equation}
\Big|\bigcup_{T\in\tubes}Y(T)\Big|\geq c C_1^{-C/\eps_1}\theta^{\eps}\lambda_Y^{4/3} \theta^{2/3} B^{-4/3}\big(\theta^3|\Omega|\big)^{1/3}\big(\theta^3|\tubes|\big)^{2/3}.
\end{equation}
Here $C$ is an absolute constant and $c>0$ depends on $\eps$ and $\eps_1$.
\end{prop}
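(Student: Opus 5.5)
Proposition \ref{volumeBdPlainyTubes} is Proposition 34 of Katz--Zahl \cite{KZ}, so the plan is to follow their planebrush argument. The plan is a higher-dimensional analogue of Wolff's hairbrush, run at scale $\theta$, with the planes $\Pi_i(Q)$ playing the role of the bristle directions.

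\textbf{Step 1: normalise.} By dyadic pigeonholing and refinement (losing only $\lessapprox 1$ factors), I would arrange:
\begin{itemize}
\item $|\tubes(Q)|\sim\mu_Y$ for every $Q\in\mathcal{Q}(Y)$;
\item $|Y(T)|\sim\lambda_Y|T|$ for every $T$;
\item the incidence structure between tubes and the planes $\Pi_i(Q)$ is uniform.
\end{itemize}
Because $\mu_Y|\bigcup Y(T)|\approx\lambda_Y\theta^3|\tubes|$, it suffices to prove an upper bound on $\mu_Y$, namely
\[
\mu_Y\lessapprox C_1^{C/\eps_1}\lambda_Y^{-1/3}\theta^{-2/3}B^{4/3}(\theta^3|\Omega|)^{-1/3}(\theta^3|\tubes|)^{1/3}.
\]

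\textbf{Step 2: build the planebrush.} Fix a typical tube $T_0$, the stem. For each cube $Q\subset Y(T_0)$ and each $i$ with $T_0\in\tubes_i(Q)$, the tubes of $\tubes_i(Q)$ lie in the $\theta$-neighborhood of the 2-plane slab through $Q$ spanned by $v(T_0)$ and $\Pi_i(Q)$. Each cube therefore contributes $B_2$ planes, each containing $\mu_Y/B$ tubes. Collecting these over all $Q\subset Y(T_0)$ gives a family of 2-dimensional slabs containing $T_0$, each carrying many tubes. The two-ends condition (with loss $C_1^{C/\eps_1}$) lets me restrict to tubes meeting $T_0$ at points separated by $\gtrsim 1$ along $T_0$.

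\textbf{Step 3: estimate the volume inside the planebrush, in two regimes.}
\begin{itemize}
\item \emph{Within a single slab.} The tubes are essentially 2-dimensional, so I would apply C\'ordoba's $L^2$ planar Kakeya bound. Direction separation within the slab is inherited from $\Omega$, with $|\tubes|/|\Omega|$ tubes per direction.
\item \emph{Across slabs.} Different slabs through $T_0$ are indexed by essentially distinct 3-dimensional directions. Away from $T_0$ they overlap in a controlled way, as in Wolff's hairbrush: two slabs containing $T_0$ and making angle $\alpha$ intersect, at distance $r$ from $T_0$, in a set of width $\lesssim\theta/(\alpha r)$.
\end{itemize}
Summing gives a lower bound for $|\bigcup Y(T)|$ in terms of the number of slabs, $\mu_Y/B$, and $\lambda_Y$. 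The number of slabs is roughly $\lambda_Y\theta^{-1}B_2$ times a multiplicity factor, divided by the $B_1/B_2\le B$ overcounting. Combining this with the trivial bound $|\bigcup Y(T)|\gtrsim\lambda_Y\theta^3|\tubes|\mu_Y^{-1}$ and optimising should produce the exponents $4/3$, $2/3$, $1/3$, $2/3$, and $B^{-4/3}$.

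\textbf{Main obstacle.} The hardest step is controlling how distinct planes through the stem overlap. Two tubes from different slabs can cross again far from $T_0$, and the $B$ factors from multiple planes per cube can double count. I would handle this by a further pigeonholing in the angle between slabs, plus a bush-type bound near $T_0$. This is where I expect the bookkeeping of $B,B_1,B_2$ and the use of the per-direction count $|\tubes|/|\Omega|$ to be delicate.
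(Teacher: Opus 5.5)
The paper does not prove this statement; it imports it verbatim as Proposition 34 of \cite{KZ}. Your sketch, however, does not reconstruct that argument. Steps 2--3 are Wolff's hairbrush with stem $T_0$. Planiness is used only to say that at each $Q\subset Y(T_0)$ the bristles lie in the planes $\Pi_i(Q)$ containing $T_0$. (These planes already contain $v(T_0)$ up to angle $\theta$, so the slab ``spanned by $v(T_0)$ and $\Pi_i(Q)$'' is just $N_\theta(\Pi_i(Q))$.)

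Such a brush cannot give the stated exponents. It consists of $\lesssim\lambda_Y\theta^{-1}\mu_Y$ tubes of volume $\sim\theta^3$. However well you control overlaps (C\'ordoba inside a slab, angular separation between slabs), the best lower bound it can certify is $|\bigcup_T Y(T)|\gtrsim\lambda_Y\theta^2\mu_Y$. Balancing this against $\mu_Y|\bigcup_T Y(T)|\approx\lambda_Y\theta^3|\tubes|$ yields at best $|\bigcup_T Y(T)|\gtrapprox\lambda_Y\theta(\theta^3|\tubes|)^{1/2}$, which is the dimension-$3$ hairbrush numerology. With $|\tubes|\sim|\Omega|\sim\theta^{-3}$ and $\lambda_Y\sim B\sim 1$ this is $\theta$, while the proposition asserts $\theta^{2/3}$. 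Your inequalities do not rule out a plany configuration with $\mu_Y\sim\theta^{-1}$ and $|\bigcup_T Y(T)|\sim\theta$. So the claim that optimising ``should produce'' the exponents $4/3,2/3,1/3,2/3$ is false: restricting which planes through a line carry bristles buys nothing over Wolff.

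The missing idea is that the base of a planebrush is two-dimensional. The following reconstruction of the argument in \cite{KZ} is mine, not something the paper spells out.
\begin{enumerate}
\item Fix a typical cube $Q_0$ and one of its planes $\Pi_0=\Pi_i(Q_0)$. By the two-ends condition, the $\mu_Y/B$ tubes of $\tubes_i(Q_0)$ have shadings covering $\gtrapprox\lambda_Y\mu_Y\theta^{-1}/B$ essentially distinct cubes of the slab $N_\theta(\Pi_0)$. The brush consists of all tubes through these base cubes.
\item The geometric input is that two $2$-planes in $\RR^4$ in general position meet in a single point. Suppose $Q'$ is far from $\Pi_0$ and $\Pi_j(Q')$ is quantitatively transverse to $\Pi_0$. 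Then every brush tube through $Q'$ lying in $\Pi_j(Q')$ must also pass within $O(\theta)$ of the point $\Pi_j(Q')\cap\Pi_0$, so there are only $O(1)$ of them.
\item Hence the $\approx\lambda_Y\mu_Y^2\theta^{-1}$ brush tubes are essentially disjoint away from the base. This gives $|\bigcup_T Y(T)|\gtrapprox\lambda_Y^2\mu_Y^2\theta^2$ up to $B$-losses.
\item Combining this with $\mu_Y|\bigcup_T Y(T)|\approx\lambda_Y\theta^3|\tubes|$ gives exactly $\lambda_Y^{4/3}\theta^{2/3}(\theta^3|\tubes|)^{2/3}$.
\end{enumerate}

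The substantive work is the degenerate situations. One is brush tubes that stay in or near $N_\theta(\Pi_0)$. The other is planes $\Pi_j(Q')$ that meet $\Pi_0$ at small angle or nearly along a line. Handling these, together with the per-direction count $|\tubes|/|\Omega|$ and the $B,B_1,B_2$ bookkeeping, is presumably where the factors $(\theta^3|\Omega|)^{1/3}$ and $B^{-4/3}$ arise. Your ``main obstacle'' paragraph is about overlaps between planes sharing a common line, which is not the geometry that drives this estimate.
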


\section{Proof of Theorem \ref{mainresult}} \label{finalproof}

In this section we will prove Theorem \ref{mainresult}. We will be borrowing the induction architecture from Katz-Zahl, adapting to the new $3/4$ exponent of $\theta$ (instead of 1 previously) in Corollary \ref{trilinearVolumeBound}. This improvement in the exponent yields the improved dimension bound. We begin with a lemma that lets us upgrade certain assertions of the form $\mathbf{TE}(d,a,b)$ to stronger assertions $\mathbf{TE}(d^\prime,a^\prime,b^\prime)$. Theorem \ref{mainresult} will eventually be proved by iterating this lemma.
 
\begin{lem}\label{boundWithCorrectTubesDependence}
Let $k=2.69$. Suppose that Assertion $\mathbf{TE}(4-\alpha, k, 1-\alpha/3)$ is true for some $0.84\leq\alpha\leq 0.97$. Define
\begin{equation}\label{defnOfAlphapPp}
\alpha^\prime =\frac{-76\alpha^2+24\alpha+189}{315-174\alpha},\quad \alpha^{\prime\prime}=\frac{3}{4}+\frac{63}{100\alpha-21}\bigg(\alpha-\frac{3}{4}\bigg).
\end{equation}
Then Assertion $\mathbf{TE}(4-\alpha^{\prime\prime},13/4, 1)$ is true. If $\alpha'\leq\alpha,$ then Assertion $\mathbf{TE}(4-\alpha^\prime,k, 1-\alpha^\prime/3)$ is true.
\end{lem}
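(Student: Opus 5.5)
The plan is to prove both conclusions in the robust transversality setting and then transfer them back to two-ends assertions. By Proposition \ref{robustTransReduction}, it suffices to prove $\mathbf{RT}(4-\alpha^{\prime\prime},13/4,1)$ and, when $\alpha^\prime\leq\alpha$, $\mathbf{RT}(4-\alpha^\prime,k,1-\alpha^\prime/3)$. The side condition $b\geq (d-1)/3$ holds in both cases, since $1-\alpha'/3=(3-\alpha')/3$. So fix $(\tubes,Y)$ that is $(\eps_1,C_1)$-two-ends and $(\eps_2,100)$-robustly transverse. Apply Lemma \ref{planyOrTrilinearLem} to obtain $\theta$, the refinement $(\tubes',Y')$, the coarse tubes $(\tubes_\theta,Y_\theta)$ with directions $\Omega$, and the numbers $B,B_1,B_2,\mu_{\operatorname{fine}}$. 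Since $|\bigcup Y'(T)|\approx \lambda_{Y'}\delta^3|\tubes'|/\mu_{Y'}$, every volume bound below will be expressed as an upper bound on $\mu_{Y'}$.

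\emph{Trilinear bound.} By item \ref{mostTriplesWedgeThetaItem} we may take $s\approx 1$ in Corollary \ref{trilinearVolumeBound}. Its pointwise form \eqref{trilinearPointwise} gives
\[
\mu_{Y'}\lessapprox \lambda^{-9/4}\theta^{-3/4}\delta^{-3/4}(\delta^3|\tubes|)^{3/4}.
\]
This bound favours large $\theta$, and the new exponent $3/4$ on $\theta$ is exactly where the improvement over \cite{KZ} enters.

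\emph{Plany bound.} By item \ref{productMultiplicity} we have $\mu_{Y'}\lessapprox \mu_{Y_\theta}\mu_{\operatorname{fine}}/B$, and I will bound the two factors separately.
\begin{enumerate}[label=(\roman*)]
\item For $\mu_{Y_\theta}$, use Proposition \ref{volumeBdPlainyTubes} applied to $(\tubes_\theta,Y_\theta)$, which is two-ends by item \ref{thetaTubesAreTwoEnds}. Combining it with $|\bigcup Y_\theta|\approx\lambda_{Y_\theta}\theta^3|\tubes_\theta|/\mu_{Y_\theta}$ gives a bound in terms of $\theta$, $B$, $|\Omega|$ and $|\tubes_\theta|$.
\item Alternatively, bound $\mu_{Y_\theta}$ by the \L{}aba--Tao estimate, Corollary \ref{KTCor}, with $m=|\tubes_\theta|/|\Omega|$. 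I will take whichever of (i) and (ii) is better, or a suitable geometric mean of them.
\item For $\mu_{\operatorname{fine}}$, rescale each $T_\theta$ anisotropically to a unit tube. The fine tubes inside it become $\delta/\theta$-tubes in essentially $\delta/\theta$-separated directions, and the rescaled shading is still two-ends (the dilation costs only constants, absorbed into $C_1^{-C/\eps_1}$). Apply the hypothesis $\mathbf{TE}(4-\alpha,k,1-\alpha/3)$ at scale $\delta/\theta$, using that each $T_\theta$ contains $\sim|\tubes'|/|\tubes_\theta|$ tubes. The numbers of fine tubes and of directions are tied through $|\Omega|\lesssim\theta^{-3}$ and $|\tubes|\leq\delta^{-3}$.
\end{enumerate}
Multiplying these gives a second bound on $\mu_{Y'}$, which favours small $\theta$.

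The final step, and the main obstacle, is the numerology. I will take a weighted geometric mean of the trilinear bound and the plany bound, choosing weights so that $\theta$, $B$ (which appears with a favourable sign, since $B\geq1$), $|\Omega|$ and $|\tubes_\theta|$ all drop out or appear with exponents whose extremal values are admissible. The resulting bound should read $|\bigcup Y|\gtrapprox \lambda^{a}\delta^{4-d}(\delta^3|\tubes|)^{b}$. With $b=1$, $a=13/4$ this should give $d=4-\alpha''$. Optimizing instead to keep $b=1-d/3$ should give $d=4-\alpha'$, and I would check that the $\lambda$ exponent is at most $k=2.69$ only when $\alpha'\leq\alpha$ and $0.84\leq\alpha\leq0.97$. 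I expect this to require checking that the worst case over $\theta$ is an interior balance point. I would first try to solve for the critical $\theta$ equating the two bounds and substitute it back, verifying that this produces the rational expressions \eqref{defnOfAlphapPp}. I would also confirm that the boundary cases $\theta\sim\delta$ and $\theta\sim1$ are no worse.
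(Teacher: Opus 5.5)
Your framework matches the paper: reduction to $\mathbf{RT}$, Lemma \ref{planyOrTrilinearLem}, the trilinear bound with $s\approx 1$, planebrush and \L{}aba--Tao for $\mu_{Y_\theta}$, the induction hypothesis at scale $\delta/\theta$ for $\mu_{\operatorname{fine}}$, and elimination of $\theta$. With only these ingredients, however, the bookkeeping does not close.

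\textbf{First missing estimate: the trivial planar bound.} The paper uses $\mu_{Y'}\lesssim\theta^{-1}\mu_{\operatorname{fine}}$. This holds because the $\theta$-tubes carrying fine tubes through $Q$ make angle $\le\theta$ with $\Pi(Q)$, so there are $\lesssim\theta^{-1}$ of them.

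To see why it is needed, write $A=|\tubes_\theta|/|\Omega|$. To keep $B$ harmless you must put weight $w\le 3/4$ on the planebrush bound; the paper takes $w=3/4$, so $B$ cancels exactly. Then $\mu_{Y_\theta}\mu_{\operatorname{fine}}/B$ carries the factor $A^{(5-w)/12}(\theta^3|\tubes_\theta|)^{(1-w)/3}$.

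The only thing that can absorb this is the induction factor $X^{\alpha/3}$, where $X=(\delta/\theta)^3|\tubes'|/|\tubes_\theta|$, through the relation $AX\lesssim 1$. That relation holds because the fine tubes in the $\sim A$ parallel $\theta$-tubes of one direction point in $\delta$-separated directions within a $\theta$-cap. This, not $|\Omega|\lesssim\theta^{-3}$ and $|\tubes|\le\delta^{-3}$, is the relation you need.

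Absorption requires $\alpha/3\ge(9-5w)/12$, i.e.\ $w\ge(9-4\alpha)/5>1$. At $w=3/4$ it would need $\alpha\ge 21/16$. The paper fixes this by raising the plany bound to the power $16\alpha/21=(\alpha/3)/(7/16)$ and taking the remaining weight from $\theta^{-1}\mu_{\operatorname{fine}}$. This produces the $\theta$-exponent $(25\alpha-21)/21$. Cancelling $\theta$ against the trilinear bound with weight $63/(100\alpha-21)$ is exactly what yields $\alpha''$.

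\textbf{Second missing estimate: Wolff's hairbrush bound.} This is Theorem \ref{wolffThm}, $\mu_{Y'}\lessapprox\lambda^{-1}\delta^{-1}(\delta^3|\tubes|)^{1/2}$. Once $\theta$ is eliminated there is no free parameter left to ``optimize to keep $b$''. The combined trilinear/plany bound has exponent $\frac{79\alpha-63}{100\alpha-21}$ on $\delta^3|\tubes|$, about $0.12$ at $\alpha=0.9$, instead of the required $\alpha/3$. Since $\delta^3|\tubes|\le 1$, this is too weak when $|\tubes|$ is small. Its $\lambda$-exponent, about $-2.21$ at $\alpha=0.9$, is also worse than the required $-(k-1)=-1.69$.

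The paper interpolates with the hairbrush bound, putting weight $\frac{(4\alpha-6)(100\alpha-21)}{348\alpha-630}$ on the previous bound. This makes the tube exponent exactly $\alpha/3$ and raises the $\lambda$-exponent to about $-1.63$. The resulting $\delta$-exponent is precisely $-\alpha'$, so the rational formula for $\alpha'$ in \eqref{defnOfAlphapPp} encodes this step and cannot come from your two-bound optimization.

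\textbf{Smaller points.}
\begin{itemize}
\item The hypothesis $\alpha'\le\alpha$ is not a $\lambda$-exponent condition. It is used only to weaken $(\delta^3|\tubes|)^{1-\alpha/3}$ to $(\delta^3|\tubes|)^{1-\alpha'/3}$, by monotonicity in $b$.
\item The $\lambda$-exponent checks \eqref{ineqInvolvingPowerLambda} and \eqref{ineqInvolvingPowerLambda2} use only $0.84\le\alpha\le 0.97$ and $k=2.69$.
\item The target exponent is $b=(d-1)/3=1-\alpha'/3$, not $1-d/3$.
\end{itemize}
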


\begin{proof}
To obtain the above assertions, it suffices (by Proposition \ref{robustTransReduction}) to prove that Assertion $\mathbf{RT}(4-\alpha^{\prime\prime},13/4,1)$ and Assertion $\mathbf{RT}(4-\alpha^\prime,k, 1-\alpha'/3)$ is true. For the latter, note that $\frac{(4-\alpha^\prime)-1}{3}= 1-\alpha'/3\leq 1$, so the hypotheses of Proposition \ref{robustTransReduction} are met. To this end, let $(\tubes,Y)$ be a set of $\delta$-tubes and their associated shading that is $(\eps_1,C_1)$-two-ends and $(\eps_2,100)$-robustly transverse, and let $\eps>0$. 

Apply Lemma \ref{planyOrTrilinearLem} to $(\tubes,Y)$ and let $\theta$, $\Omega$, $(\tubes^\prime,Y^\prime)$, $(\tubes_\theta,Y_\theta)$, and $\mu_{\operatorname{fine}}$ be the output from that Lemma. By Item \ref{tubesHaveUniformMult} from Lemma \ref{planyOrTrilinearLem}, we have that each cube $Q\in\mathcal{Q}(Y^\prime)$ satisfies $|\tubes_{Y^\prime}(Q)|\lesssim \mu_{Y^\prime}$. Thus
\begin{equation}\label{volumeBoundVsMultBound}
\begin{split}
\Big|\bigcup_{T\in\tubes}Y(T)| & \geq \Big|\bigcup_{T\in\tubes^\prime}Y^\prime(T)|\Big|\\
&\gtrsim \mu_{Y^\prime}^{-1}\lambda_{Y^\prime}(\delta^3|\tubes^\prime|)\\
&\gtrapprox \mu_{Y^\prime}^{-1}\lambda_{Y}(\delta^3|\tubes|),
\end{split}
\end{equation}
where on the final line we used the fact that $(\tubes^\prime,Y^\prime)$ is a refinement of $(\tubes,Y)$; this implies that $\lambda_{Y^\prime}(\delta^3|\tubes^\prime|)\gtrapprox \lambda_{Y}(\delta^3|\tubes|)$, and that 
\begin{equation}\label{lambdaYvsYPrime}
\lambda_Y\lessapprox \lambda_{Y^\prime}.
\end{equation} 
Thus it suffices to prove that
\begin{equation}\label{pointwiseMultBoundWrongTubeDependence}
\mu_{Y^\prime}\lessapprox \delta^{-\eps/2} C_1^{C/\eps_1}\min\Big(\lambda_{Y^\prime}^{-(k-1)}\delta^{-\alpha^\prime}(\delta^3|\tubes|)^{\alpha/3} ,\ \ \lambda_{Y^\prime}^{-9/4}\delta^{-\alpha^{\prime\prime}}\Big),
\end{equation}
where the constant $C$ may depend on $\alpha$, and the implicit constant may depend on $\eps$, $\eps_1,\eps_2$, and $\alpha$. Indeed, combining \eqref{volumeBoundVsMultBound}, \eqref{lambdaYvsYPrime}, and \eqref{pointwiseMultBoundWrongTubeDependence}, we obtain the volume bound
\begin{equation}
\Big|\bigcup_{T\in\tubes}Y(T)\Big|\gtrapprox \delta^{\eps/2}C_1^{-C/\eps_1}\max\Big(\lambda_Y^{k}\delta^{\alpha^\prime}(\delta^3|\tubes^\prime|)^{1-\alpha/3},\ \lambda_Y^{13/4}\delta^{\alpha^{\prime\prime}}(\delta^3|\tubes^\prime|)\Big).
\end{equation}
This volume bound is precisely what is needed to establish Assertion $\mathbf{RT}(4-\alpha^{\prime\prime},13/4,1)$, and it also establishes Assertion $\mathbf{RT}(4-\alpha^\prime,k, 1-\alpha^\prime/3)$ provided $\alpha'\leq\alpha$. 

The remainder of the proof will be devoted to establishing \eqref{pointwiseMultBoundWrongTubeDependence}. Define $A=|\tubes_\theta|/|\Omega|$ and define $B = \mu_{Y^\prime}/\mu_{\operatorname{fine}}$. From Proposition \ref{volumeBdPlainyTubes}, we have the bound
\begin{equation*}
\mu_{Y_\theta}\lessapprox C_1^{-C/\eps_1} \lambda_{Y_\theta}^{-1/3}\theta^{-2/3-\eps/2}A^{1/3}B^{4/3}.
\end{equation*}
From Corollary \ref{KTCor} we have the bound
\begin{equation*}
\begin{split}
\mu_{Y_\theta}&\lessapprox \lambda_{Y_\theta}^{-2}\theta^{-1-\eps/2}A^{3/4}(\theta^3 |\Omega|)^{1/3}\\
& = \lambda_{Y_\theta}^{-2}\theta^{-1-\eps/2}A^{3/4}(\theta^3 A^{-1}|\tubes_\theta|)^{1/3}
\end{split}
\end{equation*}
Combining these, we obtain the bound
\begin{equation}
\begin{split}
\mu_{Y_\theta}&\lessapprox \Big(C_1^{-C/\eps_1} \lambda_{Y_\theta}^{-1/3}\theta^{-2/3-\eps/2}A^{1/3}B^{4/3}\Big)^{3/4}
\Big( \lambda_{Y_\theta}^{-2}\theta^{-1-\eps/2}A^{3/4}(\theta^3 A^{-1}|\tubes_\theta|)^{1/3}\Big)^{1/4}\\
& \leq C_1^{-C/\eps_1} \lambda_{Y_\theta}^{-3/4}\theta^{-3/4-\eps/2}A^{7/16}B (\theta^3 A^{-1}|\tubes_\theta|)^{1/12}.
\end{split}
\end{equation}

From Items \ref{lowerBoundLambdaYTheta} and \ref{productMultiplicity} of Lemma \ref{planyOrTrilinearLem}, we have that 
\begin{equation}\label{pointwiseBdCorrectB}
\begin{split}
\mu_{Y^\prime}& \lessapprox \mu_{Y_\theta}\mu_{\operatorname{fine}}/B\\
&\lessapprox C_1^{-C/\eps_1} \lambda_{Y_\theta}^{-3/4}\theta^{-3/4-\eps/2}A^{7/16} (\theta^3 A^{-1}|\tubes_\theta|)^{1/12}\mu_{\operatorname{fine}}\\
&\lessapprox C_1^{-C/\eps_1} \lambda_{Y^\prime}^{-3/4}\theta^{-3/4-\eps/2}A^{7/16} (\theta^3 A^{-1}|\tubes_\theta|)^{1/12}\mu_{\operatorname{fine}}.
\end{split}
\end{equation}
Since the tubes whose shading contains a $\delta$-cube $Q\in\mathcal{Q}(Y^\prime)$ must lie in the $\theta$ neighborhood of a plane, we also have the bound

\begin{equation}\label{pointwiseBoundPlainyTrivial}
\mu_{Y^\prime}\leq \theta^{-1}\mu_{\operatorname{fine}}. 
\end{equation}

Combining \eqref{pointwiseBdCorrectB} and \eqref{pointwiseBoundPlainyTrivial}, we obtain the bound
\begin{equation}\label{pointwiseThetaBound}
\begin{split}
\mu_{Y^\prime}&\lessapprox  \Big(C_1^{-C/\eps_1} \lambda_{Y^\prime}^{-3/4}\theta^{-3/4-\eps/2}A^{7/16} (\theta^3 A^{-1}|\tubes_\theta|)^{1/12}\mu_{\operatorname{fine}}\Big)^{\frac{16\alpha}{21}}
\Big(\theta^{-1}\mu_{\operatorname{fine}}\Big)^{1-\frac{16\alpha}{21}}\\
&\lessapprox C_1^{-C/\eps_1}\lambda_{Y^\prime}^{\frac{-4\alpha}{7}}\theta^{\frac{4\alpha-21}{21}-\eps/2} A^{\frac{17\alpha}{63}} (\theta^3 |\tubes_\theta|)^{\frac{4\alpha}{63}}\mu_{\operatorname{fine}}.
\end{split}
\end{equation}

Since Assertion $\mathbf{TE}_\delta(4-\alpha,k, 1-\alpha/3)$ is true, we can apply it to the tubes contained in each of the $\theta$-tubes $T_\theta\in\tubes_\theta$. We conclude that
\begin{equation}\label{muFineEstimateInduction}
\begin{split}
\mu_{\operatorname{fine}}&\lesssim  C_1^{-C/\eps_1}\lambda_{Y^\prime}^{-(k-1)}(\delta/\theta)^{-\alpha-\eps/2}\Big((\delta/\theta)^3|\tubes|/|\tubes_\theta|\Big)^{\alpha/3}\\
&\lesssim C_1^{-C/\eps_1} \lambda_{Y^\prime}^{-(k-1)}(\delta/\theta)^{-\alpha-\eps/2}\Big((\delta/\theta)^3|\tubes|/|\tubes_\theta|\Big)^{4\alpha/63}A^{-17\alpha/63},
\end{split}
\end{equation}
where the implicit constant depends on $\eps,\eps_1,\eps_2,$ and $\alpha$. On the second line we used the fact that there are $\sim A$ tubes in $T_\theta$ pointing in each $\theta$-separated direction, and each of these $\theta$-tubes contain $\lesssim A^{-1}(\theta/\delta)^3$ tubes from $\tubes^\prime$. 

Combining \eqref{pointwiseThetaBound} and \eqref{muFineEstimateInduction}, we have 
\begin{equation}\label{boundInvolvingTheta}
\begin{split}
\mu_{Y^\prime}
&\lessapprox \Big(C_1^{-C/\eps_1}\lambda_{Y^\prime}^{\frac{-4\alpha}{7}}\theta^{\frac{4\alpha-21}{21}-\eps/2} A^{\frac{17\alpha}{63}} (\theta^3 |\tubes_\theta|)^{\frac{4\alpha}{63}}\Big) \\
&\quad\cdot\Big(C_1^{-C/\eps_1} \lambda_{Y^\prime}^{-(k-1)}(\delta/\theta)^{-\alpha-\eps/2}\Big((\delta/\theta)^3|\tubes|/|\tubes_\theta|\Big)^{4\alpha/63}A^{-17\alpha/63}  \Big)\\
&\leq C_1^{-C/\eps_1} \lambda_{Y^\prime}^{-\frac{4\alpha}{7} - (k-1)}\theta^{\frac{25\alpha-21}{21}}\delta^{-\alpha-\eps/2}(\delta^3|\tubes|)^{4\alpha/63}.
\end{split}
\end{equation}

By Corollary \ref{trilinearVolumeBound}, we also have the estimate 
\begin{equation}
\mu_{Y^\prime} \lessapprox  \lambda_{Y^\prime}^{-9/4}\theta^{-3/4}\delta^{-3/4-\eps/2}(\delta^3|\tubes|)^{3/4}.
\end{equation}
Interpolating these two estimates, we have
\begin{equation}\label{multiplicityBoundWrongPowerOfTubes}
\begin{split}
\mu_{Y^\prime}&\lessapprox \Big(C_1^{-C/\eps_1} \lambda_{Y^\prime}^{-\frac{4\alpha}{7} - (k-1)}\theta^{\frac{25\alpha-21}{21}}\delta^{-\alpha-\eps/2}(\delta^3|\tubes|)^{4\alpha/63}\Big)^{\frac{63}{100\alpha-21}} \\
&\quad\cdot\Big(\lambda_{Y^\prime}^{-9/4}\theta^{-3/4}\delta^{-3/4-\eps/2}(\delta^3|T|)^{3/4}\Big)^{1-\frac{63}{100\alpha-21}}\\
&\lessapprox \lambda_{Y^\prime}^{-\frac{9}{4}+\frac{63}{100\alpha-21}\big(\frac{9}{4}-\frac{4\alpha}{7}-(k-1)\big)} \delta^{-\frac{3}{4}+\frac{63}{100\alpha-21}\big(\frac{3}{4}-\alpha\big)-\frac{\epsilon}{2}}  (\delta^3|\tubes|)^{\frac{4\alpha}{100\alpha-21}+\frac{3}{4}\big(1-\frac{63}{100\alpha-21}\big)}.
\end{split}
\end{equation}

Observe that for $0.84\leq\alpha\leq 0.97$ and $k=2.69$ we can graphically check that
\begin{equation}\label{exponentvalid}
0\leq \frac{63}{100\alpha-21}\leq 1,
\end{equation}
and
\begin{equation}\label{ineqInvolvingPowerLambda}
    -\frac{9}{4}+\frac{63}{100\alpha-21}\bigg(\frac{9}{4}-\frac{4\alpha}{7}-(k-1)\bigg)\geq -\frac{9}{4},
\end{equation}
and
\begin{equation}
    \frac{4\alpha}{100\alpha-21}+\frac{3}{4}\bigg(1-\frac{63}{100\alpha-21}\bigg)>0.
\end{equation}
Thus we get 
\begin{equation}\label{fistMuYpEstimate}
\mu_{Y^\prime}\lessapprox C_1^{C/\eps_1}\delta^{-\eps/2}\lambda_{Y^\prime}^{-9/4}\delta^{-\alpha^{\prime\prime}}.
\end{equation}
This inequality is the second term in \eqref{pointwiseMultBoundWrongTubeDependence}, so we have proved half of the inequality. Our next task is to establish the remaining half of \eqref{pointwiseMultBoundWrongTubeDependence}.

From Theorem \ref{wolffThm}, we have the estimate
\begin{equation}
\mu_{Y^\prime} \lessapprox \lambda_{Y^\prime}^{-1} \delta^{-1} (\delta^3|\tubes|)^{1/2}.
\end{equation}
Interpolating these two estimates, we have

\begin{equation}
\begin{split}
\mu_{Y^\prime}&\lessapprox  \Big( \lambda_{Y^\prime}^{-\frac{9}{4}+\frac{63}{100\alpha-21}\big(\frac{9}{4}-\frac{4\alpha}{7}-(k-1)\big)} \delta^{-\frac{3}{4}+\frac{63}{100\alpha-21}\big(\frac{3}{4}-\alpha\big)-\frac{\epsilon}{2}}  (\delta^3|\tubes|)^{\frac{4\alpha}{100\alpha-21}+\frac{3}{4}\big(1-\frac{63}{100\alpha-21}\big)} \Big)^{ \frac{(4\alpha-6)(100\alpha-21)}{348\alpha-630}}\\
&\quad\cdot
\Big(\lambda_{Y^\prime}^{-1}\delta^{-1}(\delta^3|\tubes|)^{\frac{1}{2}} \Big)^{1-\frac{(4\alpha-6)(100\alpha-21)}{348\alpha-630}}\\
&\leq C_1^{-C/\eps_1}\lambda_{Y^\prime}^{\frac{(462-322\alpha-126k)(2\alpha-3)}{348\alpha-630}-1}  \delta^{\frac{76\alpha^2-24\alpha-189}{315-174\alpha}} (\delta^3|\tubes|)^{\alpha/3}.
\end{split}
\end{equation}

Note that if $0.84\leq\alpha\leq 0.97$ and $k=2.69$, then one can graphically check that
\begin{equation}
    0\leq\frac{(4\alpha-6)(100\alpha-21)}{348\alpha-630}\leq 1,
\end{equation}
and
\begin{equation}\label{ineqInvolvingPowerLambda2}
    \frac{(462-322\alpha-126k)(2\alpha-3)}{348\alpha-630}-1 >-(k-1).
\end{equation}
Thus 
\begin{equation}\label{secondMuYpEstimate}
\mu_{Y^\prime}\lessapprox C_1^{C/\eps_1} \delta^{-\eps/2}  \lambda_{Y^\prime}^{-(k-1)}\delta^{-\alpha^{\prime}}(\delta^3|\tubes|)^{\alpha/3}.
\end{equation}
The estimates \eqref{fistMuYpEstimate} and \eqref{secondMuYpEstimate} give us \eqref{pointwiseMultBoundWrongTubeDependence}.
\end{proof}

\begin{lem}\label{TwoEndsBoundCorrectTubeDependence}
Let $\alpha = \frac{1}{196}(291-\sqrt{10593})$, let $d_0 =\frac{13}{4}-\frac{12951-54\sqrt{10593}}{41972} $ and let $k=2.69$. Then for each $\eps>0$, 
\begin{equation}\label{Ass1IsTrue}
\operatorname{Assertion} \mathbf{TE}\big(4-\alpha-\eps,\ k,\ 1- \alpha/3\big)\quad\textrm{is true},
\end{equation} 
and
\begin{equation}\label{Ass2IsTrue}
\operatorname{Assertion}\mathbf{TE}\big(d_0-\eps,\ 13/4,\ 1\big)\quad\textrm{is true}.
\end{equation} 
\end{lem}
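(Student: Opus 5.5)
The plan is to iterate Lemma \ref{boundWithCorrectTubesDependence}, starting from a known base case, and to show that the iterates $\alpha_j$ decrease to the fixed point $\alpha_*=\frac{1}{196}(291-\sqrt{10593})$ of the map $\alpha\mapsto\alpha'$. The first step is to identify this fixed point. Setting $\alpha'=\alpha$ in \eqref{defnOfAlphapPp} gives
\begin{equation*}
\alpha(315-174\alpha)=-76\alpha^2+24\alpha+189,
\end{equation*}
that is, $98\alpha^2-291\alpha+189=0$. Its roots are $\alpha=\frac{291\pm\sqrt{10593}}{196}$, and the smaller root $\alpha_*\approx 0.9595$ lies in $[0.84,0.97]$. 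Next I would check that $\alpha''(\alpha_*)=\frac34+\frac{63}{100\alpha_*-21}(\alpha_*-\frac34)$ equals $4-d_0$. This is routine algebra using the quadratic relation, and it accounts for the $\sqrt{10593}$ and the denominator $41972$ appearing in $d_0$.

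For the base case I need Assertion $\mathbf{TE}(4-\alpha_0,k,1-\alpha_0/3)$ with $\alpha_0\in(\alpha_*,0.97]$. Wolff's Theorem \ref{wolffThm} together with Proposition \ref{robustTransReduction} gives $\mathbf{TE}(3,2,1/2)$. This has the wrong $b$, since $1-\alpha/3=2/3>1/2$ at $\alpha=1$, and only $\alpha=1$ itself. So I would instead take the Katz--Zahl output: \cite{KZ} proves $\mathbf{TE}$ at dimension about $3.059$ with the appropriate exponents. Alternatively, I would run Lemma \ref{boundWithCorrectTubesDependence} at $\alpha=0.97$, justified by a trivial assertion valid at that point. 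Concretely, I would verify that some $\alpha_0\le 0.97$ with $\alpha_0>\alpha_*$ is available, since $a$ can be increased freely up to $k$. I expect this step, namely finding a legitimate starting assertion in the window $[0.84,0.97]$ with $a\le k$ and $b=1-\alpha_0/3$, to be the main obstacle. If needed, I would invoke the analogous lemma from \cite{KZ}, whose fixed point is about $0.941$, i.e.\ dimension $3.059$; this gives $\alpha_0$ near $1-0.059$ and exponents matching.

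For the iteration, define $f(\alpha)=\frac{-76\alpha^2+24\alpha+189}{315-174\alpha}$. On $[\alpha_*,0.97]$ the numerator and denominator are positive. I would check that $f(\alpha)-\alpha$ has the sign of $98\alpha^2-291\alpha+189$, up to the positive factor $1/(315-174\alpha)$. This quantity is $\le0$ between the roots, so $f(\alpha)\le\alpha$ for $\alpha\in[\alpha_*,0.97]$. I would also check that $f$ is increasing there, by a derivative check, so that $f(\alpha)\ge f(\alpha_*)=\alpha_*$. Hence $\alpha_{j+1}=f(\alpha_j)$ stays in $[\alpha_*,0.97]\subset[0.84,0.97]$, the condition $\alpha'\le\alpha$ of Lemma \ref{boundWithCorrectTubesDependence} holds at each stage, and the sequence decreases to a fixed point, which must be $\alpha_*$. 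Given $\eps>0$, choose $j$ with $\alpha_j\le\alpha_*+\eps$. Then $\mathbf{TE}(4-\alpha_j,k,1-\alpha_j/3)$ holds, and this implies $\mathbf{TE}(4-\alpha_*-\eps,k,1-\alpha_*/3)$. To see this, note that decreasing $d$ is trivially allowed. For $b$, we have $1-\alpha_j/3\le 1-\alpha_*/3$, and raising $b$ is harmless because $\delta^3|\tubes|\lesssim1$ for direction-separated tubes. This proves \eqref{Ass1IsTrue}.

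For \eqref{Ass2IsTrue}, the same Lemma \ref{boundWithCorrectTubesDependence} applied at $\alpha_j$ yields $\mathbf{TE}(4-\alpha''(\alpha_j),13/4,1)$. The map $\alpha\mapsto\alpha''$ is continuous near $\alpha_*$, since $100\alpha-21>0$. Hence $\alpha''(\alpha_j)\to\alpha''(\alpha_*)=4-d_0$, and for $j$ large we get $4-\alpha''(\alpha_j)\ge d_0-\eps$, which gives \eqref{Ass2IsTrue}.
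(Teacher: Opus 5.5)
Your iteration step and the passage to the limit are correct and follow the paper's proof. You in fact justify more than the paper does: the fixed-point equation $98\alpha^2-291\alpha+189=0$, the facts $f(\alpha)\le\alpha$ and $f'>0$ on $[\alpha_*,0.97]$ (giving $\alpha_j\searrow\alpha_*$ inside the admissible window), and the identity $\alpha''(\alpha_*)=4-d_0$ all check out.

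The gap is the base case. It is the only non-formal input, and none of the sources you propose supplies it.
\begin{itemize}
\item The Katz--Zahl dimension-$3.059$ result is their analogue of \eqref{Ass2IsTrue}, so it has $b=1$. Lowering $b$ by $\beta$ costs $3\beta$ in $d$, so it does not yield $\mathbf{TE}(4-\alpha_0,k,1-\alpha_0/3)$.
\item There is no ``trivial'' assertion available at $\alpha=0.97$.
\item Your description of the Katz--Zahl iteration is wrong. The number $0.941=4-3.059$ is their analogue of $\alpha''$, not the fixed point of their map $\alpha\mapsto\alpha'$. Taken literally, $\alpha_0\approx0.941$ lies below $\alpha_*\approx0.9596$, which contradicts your own requirement $\alpha_0>\alpha_*$. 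Lemma \ref{boundWithCorrectTubesDependence} would then give dimension $4-\alpha''(0.941)\approx3.085>d_0$, which is a sign that this input is not available.
\item Your Wolff remark misidentifies the obstruction. Raising $b$ is free, so after raising $b$ to $2/3$ (needed anyway for Proposition \ref{robustTransReduction}) Wolff gives exactly the case $\alpha=1$. The real problem is that $\alpha=1$ lies outside the range $[0.84,0.97]$ of Lemma \ref{boundWithCorrectTubesDependence}; for instance, \eqref{ineqInvolvingPowerLambda} fails for $\alpha>0.98$ when $k=2.69$.
\end{itemize}

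What you need, and what the paper uses, is an intermediate stage of the Katz--Zahl argument \cite{KZ}, not its final output. Their proof of their analogue of \eqref{Ass1IsTrue} is an iteration of the same type, with $\lambda$-exponent $75/28$. Its iterates $\alpha_j$ satisfy $\mathbf{TE}(4-\alpha_j,75/28,1-\alpha_j/3)$ and decrease to $\frac{257-\sqrt{17665}}{128}\approx0.9695<0.97$. So some iterate $\alpha_1$ lies in $(\alpha_*,0.97]$. Since $75/28<2.69$ and increasing $a$ is free (because $\lambda_Y\le1$), this gives a legitimate starting assertion $\mathbf{TE}(4-\alpha_1,k,1-\alpha_1/3)$. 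With this input, the rest of your argument goes through as written.
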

\begin{proof} We will begin with \eqref{Ass1IsTrue}. In Katz-Zahl, while proving (111), we cross below the threshold $\alpha=0.97$ since $\frac{257-\sqrt{17665}}{128}=0.964551...$, and we have $2.69>75/28$ also. Therefore Assertion $\mathbf{TE}_\delta(4-\alpha_1,k, 1-\alpha_1/3)$ holds for some $\alpha_1$ just below 0.97 which we can start with. For each $j\geq 1$ suppose that Assertion $\mathbf{TE}(4-\alpha_{j},k, 1-\alpha_j/3)$ is true, and define 
\begin{equation*}
\alpha_{j+1} = \frac{-76\alpha_j^2+24\alpha_j+189}{315-174\alpha_j}.
\end{equation*}
Since $\alpha_{j+1}\leq\alpha_j$, Lemma \ref{boundWithCorrectTubesDependence} implies that Assertion $\mathbf{TE}(4-\alpha_{j+1},k, 1-\alpha_{j+1}/3)$ is true.  Observe that $\alpha_j \searrow \alpha$. Thus for each $\eps>0$, there is an index $j$ so that $\alpha_j\leq \alpha+\eps$. We conclude that \eqref{Ass1IsTrue} holds.

For \eqref{Ass2IsTrue}, note that for each $\eps>0$, \eqref{Ass2IsTrue} follows from Lemma \ref{boundWithCorrectTubesDependence} and \eqref{Ass1IsTrue}. 
\end{proof}
We are now ready to prove Theorem \ref{mainresult}.
\begin{proof}[Proof of Theorem \ref{mainresult}]
This follows from \eqref{Ass2IsTrue}.
\end{proof}

\bibliographystyle{abbrv}

\end{document}